\DeclareMathAlphabet{\curly}{U}{rsfs}{m}{n}
\theoremstyle{remark}
\theoremstyle{plain}
\newtheorem{lem}{Lemma}[section]
\newtheorem{thm}{Theorem}
\newtheorem{cor}{Corollary}
\numberwithin{equation}{section}
\newcommand{\ZZ}{{\mathbb Z}}
\newcommand{\QQ}{{\mathbb Q}}
\newcommand{\RR}{{\mathbb R}}
\newcommand{\CC}{{\mathbb C}}
\newcommand{\NN}{{\mathbb N}}
\newcommand{\bal}{\[\begin{aligned}}
\newcommand{\eal}{\end{aligned}\]}
\newcommand{\be}{\begin{equation}}
\newcommand{\ee}{\end{equation}}
\newcommand{\ssum}[1]{\sum_{\substack{#1}}}  
\newcommand{\benn}{\begin{equation*}}
\newcommand{\eenn}{\end{equation*}}
\newcommand{\lam}{\ensuremath{\lambda}}
\renewcommand{\a}{\ensuremath{\alpha}}
\renewcommand{\b}{\ensuremath{\beta}}
\newcommand{\g}{\gamma}
\newcommand{\del}{\ensuremath{\delta}}
\newcommand{\eps}{\ensuremath{\varepsilon}}
\newcommand\hh{\hat{h}}
\renewcommand{\SS}{\mathcal{S}}  
\newcommand{\HH}{\mathcal{H}}  
\renewcommand{\(}{\left(}
\renewcommand{\)}{\right)}
\newcommand{\pfrac}[2]{\left(\frac{#1}{#2}\right)}
\newcommand{\order}{\asymp} 
\renewcommand{\le}{\leqslant}
\renewcommand{\leq}{\leqslant}
\renewcommand{\ge}{\geqslant}
\renewcommand{\geq}{\geqslant}
\newcommand{\sumstarrho}{{\sum_\rho}^*}
\begin{document}

\title{Unnormalized differences between zeros of $L$-functions}
\author{Kevin Ford, Alexandru Zaharescu}
\date{\today}
\address{Department of Mathematics, 1409 West Green Street, University
of Illinois at Urbana-Champaign, Urbana, IL 61801, USA}
\email{ford@math.uiuc.edu}
\email{zaharesc@math.uiuc.edu}

\begin{abstract}
We study a subtle inequity in the distribution of unnormalized differences between imaginary parts of zeros
of the Riemann zeta function, which was observed by a number of authors.  
We establish a precise measure which explains the phenomenon,
that the location of each Riemann zero is encoded in the distribution
of large Riemann zeros.  We also extend these results to zeros of more general $L$-functions.
In particular, we show how the rank of an elliptic curve over $\mathbb{Q}$ is encoded in the 
sequences of zeros of {\it other} $L-$functions, not only the one associated to the curve.

\end{abstract}

\thanks{The first author was supported in part by National Science Foundation
grant DMS-1201442}
\thanks{2010 Mathematics Subject Classification: Primary 11M26; Secondary 11K38}
\keywords{Riemann zeta function, L-functions, distribution of zeros}

\maketitle


\section{Introduction}


The study of local spacing distribution of sequences
was initiated by physicists (see Wigner \cite{Wi} and Dyson \cite{Dy}) in order to understand the spectra of high energies. These notions have received a great deal of attention in many areas of mathematical physics, analysis, probability theory and number theory. After the pioneering 
work of Montgomery \cite{Mo} on the pair correlation of the imaginary parts of
zeros of the Riemann zeta function, followed by Hejhal's investigation \cite{He} of the triple correlation, higher level correlations for general $L-$functions have been studied by
Rudnick and Sarnak \cite{RS}, and Katz and Sarnak \cite{KS1}, \cite{KS2}. 
In particular, in the case of the Riemann zeta function, the above mentioned works establish
the existence for all $m$ of the limiting $m-$level correlations, for certain large classes of
test functions, and confirm that the limiting correlations are the same as those from the GUE model. One important feature of the GUE model is that the density function vanishes at the origin. 
This means strong repulsion between consecutive zeros of the Riemann zeta function
(see also supporting numerical computations by Odlyzko \cite{Od}).

More recently, the distribution of {\it unnormalized} differences between zeros have been considered
by a number of researchers.  Motivated in part by attempts to understand the limitations of the predictions
of random matrix theory, Bogomolny and Keating \cite{BoKe} were the first to make a percise conjecture for the 
discrepancy of such differences; see also the survery articles of Berry-Keating \cite{BeKe} and Snaith \cite{Sn}.
In particular, as pointed out in \cite{BeKe}, there seems to be a striking resurgence phenomenon: in the pair
correlation of high Riemann zeros, the low Riemann zeros appear as resonances; see also \cite[Fig. 3]{Sn} showing
the gaps between the first 100,000 Riemann zeros.

In 2011, P\'erez-Marco \cite{Ma} performed
extensive numerical studies of the distribution of differences of zeros of 
the Riemann zeta function and of other $L$-functions.
He also highlighted an interesting discrepancy phenomenon for zeros of $\zeta(s)$, namely
that the differences tend to avoid imaginary parts of small zeros, e.g. 
there is a deficiency of differences near $\gamma_1=14.1347\ldots$, the imaginary part of the smallest nontrivial
zero of $\zeta$. One may interpret this as saying that there is some repulsion
between the imaginary parts of zeros
of $\zeta(s)$, and the translates of these imaginary parts by $\gamma_1$.
The reader is referred to \cite{Ma} for extensive data supporting this phenomenon,
for $\zeta(s)$ as well as for more general $L-$functions.

In the present paper, our goal is to prove, unconditionally, 
a precise measure of the discrepancy of gaps in the distribution of unnormalized 
differences between zeros.
To proceed, let $\hh(\xi)=\int h(x) e^{-2\pi i x \xi}\, dx$ denote the Fourier transform of $h$. 
We will consider $h\in \HH$, the class of functions $h$ satisfying
\begin{enumerate}
 \item $\int h = 0$,
\item $\int |x h(x)|\, dx < \infty$
\item $|\hh'(\xi)|\ll |\xi|^{-5}$ for $|\xi|\ge 1$.
\end{enumerate}

Let $\HH_0$ be the set of functions $h\in \HH$ such that in addition,
0 is not in the closure of the support of $h$.

\begin{thm}\label{main}
Suppose $h\in \HH_0$. For $T\ge 2$,
\[
 \sum_{0<\g,\g' \le T} h(\g-\g') = \frac{T}{2\pi} \int_{-\infty}^\infty h(t) \big(
K(1+it)+K(1-it) \big)\, dt + O\pfrac{T}{(\log T)^{1/3}},
\]
where $K(s)$ is defined as
\[
 K(s) = \sum_{n=1}^\infty \frac{\Lambda^2(n)}{n^s} \qquad (\Re s > 1),
\]
and by analytic continuation to $\{ s : \Re s \ge 1, s\ne 1 \}$.
Here $\g,\g'$ are imaginary parts of  nontrivial zeros  of $\zeta(s)$,
each zero occurring in the sum the number of times of its multiplicity.
\end{thm}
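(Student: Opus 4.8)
The plan is to pass from the double sum over zeros to a double sum over prime powers by means of the explicit formula, and to recognise the right-hand side of the theorem as the \emph{diagonal} part of the resulting prime sum. I would begin by writing $N(t)=\widetilde N(t)+S(t)$, with $\widetilde N(t)=\tfrac{t}{2\pi}\log\tfrac{t}{2\pi e}+\tfrac78$ the smooth main term and $S(t)=\tfrac1\pi\arg\zeta(\tfrac12+it)=O(\log t)$ the fluctuation, and expanding
\[
 \sum_{0<\g,\g'\le T}h(\g-\g')=\int_0^T\!\!\int_0^T h(t-u)\,dN(t)\,dN(u)=I_1+I_2+I_3
\]
according to the splitting $dN=d\widetilde N+dS$ in each variable. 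In $I_1$ the potential leading term, of size $T(\log T)^2$, vanishes because $\int h=0$, and the remaining pieces are $O_h\big((\log T)^{O(1)}\big)$ once the tails of $h$ are controlled using $\int|xh(x)|\,dx<\infty$; the mixed term $I_2$, which contains no ``resonant'' factor, is likewise $O_h\big((\log T)^{O(1)}\big)$. Integrating by parts twice --- the boundary contributions are $O_h\big((\log T)^{O(1)}\big)$, using $h(0)=0$ --- puts $I_3$ in the form
\[
 I_3=-\int_0^T\!\!\int_0^T h''(t-u)\,S(t)S(u)\,dt\,du+O_h\big((\log T)^{O(1)}\big),
\]
so that the entire main term of size $T$ is carried by this $h''$-weighted second moment of $S$.

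To evaluate it I would substitute a classical truncated explicit formula for $S(t)$, essentially $-\tfrac1\pi\sum_{n\le x}\tfrac{\Lambda(n)}{\sqrt n\log n}\sin(t\log n)$ plus an error (with $x\ge2$ a parameter to be optimised at the end), into both factors, and integrate term by term. The diagonal $m=n$ yields the main term: from the elementary identity $\int_0^T\!\int_0^T g(t-u)\,dt\,du=\int_{-T}^{T}g(v)(T-|v|)\,dv$ and the fact that $vh''(v)\to0$ (a consequence of conditions (2)--(3)), one gets
\[
 \int_0^T\!\!\int_0^T h''(t-u)\cos((t-u)\log n)\,dt\,du=T\int_{\RR}h''(v)\cos(v\log n)\,dv+o(T),
\]
and $\int_{\RR}h''(v)\cos(v\log n)\,dv=-\tfrac12(\log n)^2\big(\hh(\tfrac{\log n}{2\pi})+\hh(-\tfrac{\log n}{2\pi})\big)$. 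The factor $(\log n)^2$ exactly cancels the $(\log n)^{-2}$ coming from the coefficients of $S$, leaving a constant multiple of $T\sum_n\Lambda^2(n)n^{-1}\big(\hh(\tfrac{\log n}{2\pi})+\hh(-\tfrac{\log n}{2\pi})\big)$; a careful accounting of the numerical constant recovers the asserted main term, and completing the truncated sum to all $n$ costs only $O(T/\log x)$ since $\hh(\xi)\ll|\xi|^{-4}$.

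That this prime-power sum is the integral in the statement follows from a separate, purely elementary, manipulation. For $\sigma>1$, termwise integration (valid because conditions (1)--(3) force $\hh(\xi)\ll_h(1+|\xi|)^{-4}$) gives
\[
 \int_{-\infty}^{\infty} h(t)\big(K(\sigma+it)+K(\sigma-it)\big)\,dt=\sum_{n\ge1}\frac{\Lambda^2(n)}{n^{\sigma}}\Big(\hh\!\big(\tfrac{\log n}{2\pi}\big)+\hh\!\big(-\tfrac{\log n}{2\pi}\big)\Big);
\]
since $h$ vanishes in a neighbourhood of $0$ --- the one place where $h\in\HH_0$, rather than merely $h\in\HH$, is used, as it tames the double pole of $K(s)$ at $s=1$ --- one may let $\sigma\to1^{+}$ and identify $\tfrac{T}{2\pi}\sum_{n\ge1}\Lambda^2(n)n^{-1}\big(\hh(\tfrac{\log n}{2\pi})+\hh(-\tfrac{\log n}{2\pi})\big)$ with $\tfrac{T}{2\pi}\int_{-\infty}^{\infty}h(t)\big(K(1+it)+K(1-it)\big)\,dt$.

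Everything else is error: the off-diagonal terms $m\ne n$, the error in the explicit formula for $S$, zeros possibly off the critical line, the lower-order remainders left by $\int h=0$, and the boundary effects near $0$ and $T$. Bounding all of these \emph{uniformly} by $O\big(T/(\log T)^{1/3}\big)$ is the real work, and the step I expect to be the main obstacle --- the more so because the hypotheses give only $h(x)\ll1/|x|$ pointwise, so every estimate must be run through the weighted $L^1$-bound (2) and the support condition rather than through genuine decay of $h$. The machinery: the off-diagonal $t$-integrals are $\ll\min\big(T,1/|\log(m/n)|\big)$; the resulting bilinear form in $\Lambda(m)\Lambda(n)$ is controlled by standard estimates for $\zeta$ and $\zeta'/\zeta$ near $\Re s=1$ together with the classical zero-free region; off-line zeros are discarded using zero-density estimates; and these errors are balanced against the $O(T/\log x)$ truncation loss by the optimal choice of $x$, the exponent $\tfrac13$ being the outcome of that optimisation. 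An essentially equivalent way to organise the argument is the Fourier one: write $\sum_{0<\g,\g'\le T}h(\g-\g')=\int_{\RR}\hh(\xi)\,\big|\sum_{0<\g\le T}e^{2\pi i\g\xi}\big|^2\,d\xi$ and analyse the inner exponential sum near its ``resonances'' $\xi=\pm\tfrac{\log n}{2\pi}$; this makes the appearance of $\Lambda^2(n)$ and the repulsion phenomenon observed by P\'erez-Marco transparent, at the cost of the same off-diagonal and error estimates.
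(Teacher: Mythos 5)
Your proposal takes a route genuinely different from the paper's, although the two are dual to each other, as you note at the end. The paper works entirely in the frequency variable: it writes $\sum_{0<\g,\g'\le T}h(\g-\g')=\int_{\RR}\hh(\xi)\,Q(\xi)\overline{Q(\xi)}\,d\xi$ with $Q(\xi)=\sum_{0<\g\le T}e^{2\pi i\g\xi}$, decomposes $Q=M+D+E$ where $M$ comes from a uniform version of Landau's lemma for $\sum_{0<\g\le T}x^{\rho}$, where $D$ isolates the contribution of possible off-line zeros (a factor $1-e^{2\pi\xi(\beta-1/2)}$ attached to each zero), and where $E$ is the Landau-lemma error; the mean square of $D$ is then controlled by a zero-density estimate (Lemma 3.1), and the mean square of $E$ is pointwise small because Landau's error is explicit and because $\hh(0)=0$ tames the region $\xi\approx 0$. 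Your proposal instead works in the ordinate variable, splitting $dN=d\widetilde N+dS$, showing $I_1$ and $I_2$ are $O((\log T)^{O(1)})$ via $\int h=0$, integrating by parts to reach $-\iint h''(t-u)S(t)S(u)\,dt\,du$, and then inserting a truncated explicit formula for $S(t)$. The diagonal computation and the observation that the $(\log n)^2$ from $\widehat{h''}$ cancels the $(\log n)^{-2}$ in the coefficients of $S$ are correct, and the passage from the prime-power sum to $\int h(t)\big(K(1+it)+K(1-it)\big)\,dt$ via $\sigma\to1^{+}$, using the support condition in $\HH_0$, is the same as the paper's final step. So the skeleton is sound.

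The gap is exactly where you say it is, but it is worth being precise about why it is not a routine gap. Landau's lemma gives a \emph{pointwise} (in $\xi$) expansion of $Q(\xi)$ with a controllable error; by contrast, Selberg-type truncated formulas for $S(t)$ have errors that are only small \emph{in mean square}, and the off-line-zero contribution inside that error is what one must feed a zero-density estimate. After inserting $S=P+E$ into $\iint h''(t-u)S(t)S(u)$, the cross terms $\iint h'' P E$ and $\iint h'' E E$ require something like $\int_0^T|E(t)|^2\,dt\ll T/(\log T)^{c}$, and this is not a bound you can obtain by pointwise estimates; one needs the mean-square theory of $S(t)$ together with Selberg's zero-density result, and the convolution structure (Plancherel applied to $h''\ast P$) to avoid losing a factor $\sqrt{\log\log x}$ from $\int_0^T|P|^2\sim T\log\log x$. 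None of this is carried out. In the paper these same difficulties are pre-packaged more cleanly: the zero-density estimate enters only through Lemma 3.1 bounding $\int_0^U|D_F(\xi)|^2\,d\xi$, and there is no mean-square theory of $S(t)$ to invoke. So the proposal is a correct program, but the hardest quantitative step — the unconditional error bound — is a placeholder.

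Two smaller points. First, your assertion that the exponent $\tfrac13$ ``is the outcome of the optimisation'' is not quite right: in the paper the genuine error is of size $T(\log\log T)^{O(1)}/(\log T)^{1/2}$ (the bottleneck is $\mathcal{M}^{1/2}\mathcal{D}^{1/2}$), and $T/(\log T)^{1/3}$ is merely a clean weaker bound, not a sharp trade-off between truncation loss and off-diagonal error. Second, the paper's truncation level is $U=c\log T/\log\log T$, i.e.\ $x=e^{2\pi U}$ with $\log x\asymp\log T/\log\log T$; with that choice the naive ``truncation loss $O(T/\log x)$'' you mention would already give $T\log\log T/\log T$, which is fine, but the real losses come from the zero-density and Landau-error pieces, not the truncation of the prime sum.

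Finally, a caution on your diagonal constant. Your computation yields a main term proportional to $\tfrac{T}{2\pi^2}\Re\sum_n\Lambda^2(n)n^{-1}\hh(\tfrac{\log n}{2\pi})$, which (since $\hh(-\xi)=\overline{\hh(\xi)}$) equals $\tfrac{T}{4\pi^2}\int h(t)\big(K(1+it)+K(1-it)\big)\,dt$. You flagged the constant as something to be tracked carefully; do track it, since a direct frequency-side computation along the paper's lines gives the same $\tfrac{T}{2\pi^2}$ coefficient of $\Re\sum$ as your ordinate-side one, so at least the two methods agree, but you should verify the factor of $2\pi$ against an independent source (e.g.\ the Conrey--Snaith or Rodgers statements) before asserting a match with the displayed theorem.
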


We mention here that the conclusion of Theorem \ref{main}, with suitable test functions $h$, has been obtained by
Conrey and Snaith \cite{CS} under the assumption of the $L$-functions ratios conjecture
of Conrey, Farmer and Zirnbauer \cite{CFZ}.  More recently, the conclusion of Theorem \ref{main}
was deduced, for test functions $h$ with compactly supported Fourier transforms and under the
assumption of the Riemann Hypothesis, by Rodgers \cite[Theorem 1.4]{Ro}.

{\bf Remarks.}  (i) The constant implied by the $O-$symbol depends on $h$.  
By standard counts of zeros (see \eqref{NT} and \eqref{NT1} below), if $h$ has compact support
then there are $\order T\log^2 T$ nonzero summands $h(\g-\g')$.  Thus, Theorem \ref{main}
implies that there is a discrepancy in the distribution of $\g-\g'$ of relative order 
$1/\log^2 T$.  The discrepancy has a ``density function'' $K(1+it)+K(1-it)$,
which is graphed in Figure 1.

(ii) $K(s)$ is close to the ``nice'' (from an analytic point of view) function
\[
 \pfrac{\zeta'}{\zeta}'(s) = \sum_{n=1}^\infty \frac{\Lambda(n) \log n}{n^s} \qquad (\Re s > 1).
\]
Let $b_m=\mu(ker(m)) \phi(ker(m))$, where $ker(m)=\prod_{p|m} p$.  Using the identity
$1=\sum_{m|k} (k/m)b_m$, we obtain the meromorphic continuation
\[
 K(s) = \sum_{m=1}^\infty b_m  \pfrac{\zeta'}{\zeta}'(ms) \qquad (\Re s > 0),
\]
provided that $ms\ne 1$ and
$\zeta(ms)\ne 0$ for all $m\in \NN$, the sum on $m$ converges absolutely, since 
$|b_m| \le m$ and $(\zeta'/\zeta)'(\sigma+it) = O(2^{-\sigma})$ for $\sigma\ge 2$.

Invoking the explicit formula for $(\zeta'/\zeta)(s)$ \cite[\S 12, (8)]{Da}
one gets, for any fixed $M\ge 4$ and $\sigma=\Re s > \frac12$,
\be\label{explicit}
K(s) = \sum_{m=1}^M \frac{b_m}{m^2} \( \frac{1}{(s-1/m)^2} - \sumstarrho \frac{1}{(s-\rho/m)^2}\) + 
O\( M 2^{-\sigma M} \).
\ee
where $\sumstarrho$ denotes a sum over \emph{all} zeros of $\zeta$, including the trivial zeros.

The terms in \eqref{explicit} corresponding to $m=1$ are the most significant.  Here $b_1=1$,
and we see that if $s=1+it$ and $t \approx \Im \rho$ for some $\rho$, the term in
\eqref{explicit} corresponding to $\rho$ will be 
$$-\frac{b_m}{m^2} \frac{1}{(s-\rho/m)^2} \approx -\frac{1}{(1-1/2)^2}=-4.$$
This accounts for the noticeable dips of about $-8$ in magnitude on the graph of $K(1+it)+K(1-it)$ (Figure 1)
corresponding to low-lying zeros  of $\zeta(s)$ (imaginary parts 
$\gamma_1=14.134\ldots, \gamma_2=21.022\ldots, \ldots$).
Likewise, for small $m > 1$, if $t \approx \Im \rho/m$ for some $\rho$
there will be a large positive or negative (depending on the sign of $b_m$)
term in \eqref{explicit} corresponding to $\rho$.  In particular, if $t\approx \Im \rho/2$,
the term in \eqref{explicit} corresponding to $\rho$ is about $-\frac{b_2}{2^2} \frac{1}{(1-1/4)^2}=\frac{4}{9}$, 
and if $t\approx \Im \rho/3$, the term corresponding to $\rho$ is about $\frac{8}{25}$.
  We have highlighted a few values of $t$
in Figure 1 where terms corresponding to $m=2$ and $m=3$ produce noticeable peaks 
in the graph.

\medskip
\medskip

\begin{figure}
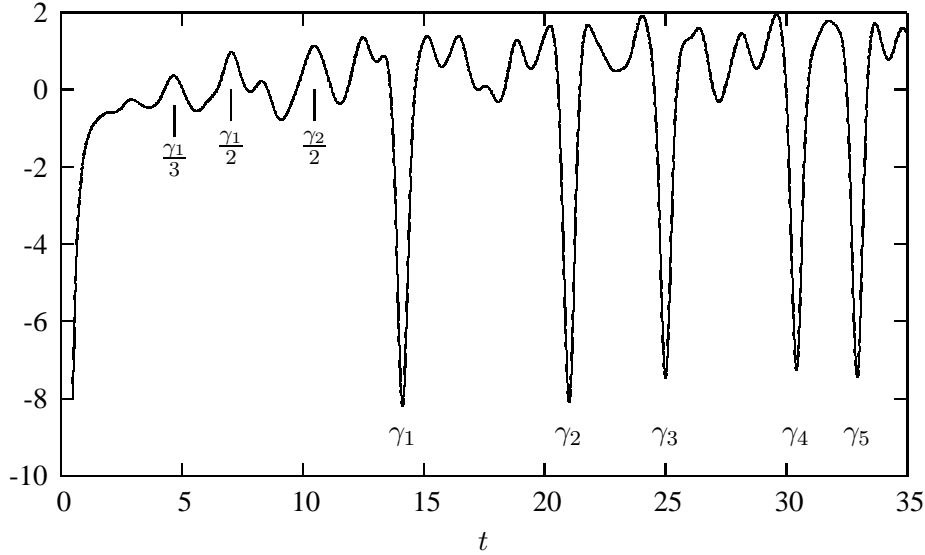

\begin{center}
\input gammaplot1-35.tex
\end{center}
\caption{Plot of $K(1+it)+K(1-it)$ together with small zeros of $\zeta(s)$}
\end{figure}

%
%

It is not difficult to generalize Theorem \ref{main} to the study of differences
of imaginary parts of zeros of a large class of $L$-functions.
Generally speaking, an \emph{$L$-function} is a Dirichlet series $F(s)=
\sum_{n=1}^\infty a_F(n) n^{-s}$ whose coefficients possess both 
\emph{multiplicative structure} (that is, $F$ has a Euler product) and \emph{additive structure}
(encoded in a functional equation equation for $F$).
One framework in which to operate is the Selberg Class $\SS$, the set of
$F$ satisfying the following axioms:
\par
 (i) there exists an integer $m \geq 0$ such that $(s-1)^mF(s)$
is an entire function of finite order; 
\par (ii)  $F$ satisfies
a functional equation 
\be\label{funceq}
\Phi(s) = w\overline{\Phi}(1-s), \qquad
\Phi(s) = Q^s \prod_{j=1}^r \Gamma(\lambda_j s + \mu_j) F(s) 
\ee
with $Q>0$, $\lambda_j > 0$, $\mu_j\in\CC$, $\Re(\mu_j) \geq 0$ and $|w|=1$.  (Here,
$\overline{f}(s) = \overline{f(\overline{s})}$); 
\par (iii) $F(s)$ has an Euler product ($a_F(n)$ is multiplicative), which we write as
$$
-\frac{F'}{F}(s)
=\sum_{n=1}^\infty \Lambda_F(n) n^{-s},
$$
where $\Lambda_F(n)$ is supported on powers of primes;
\par (iv) $\Lambda_F(n) \ll n^{\theta_F}$ for some $\theta_F
< \frac12$; and 
\par  (v) for every $\eps>0$, $a_F(n)\ll_\eps n^{\eps}$.

The functional equation is not uniquely determined in light of the
duplication formula for $\Gamma$-function, however the real sum
$d_F = 2 \sum_{j=1}^r \lam_j$
is well-defined and is known as the degree of $F$. 

For an introduction to results and conjectures concerning the
Selberg class, the reader may consult the survey papers of Kaczorowski and Perelli
\cite{KP}, Kaczorowski \cite{Ka} and Perelli \cite{Pe1,Pe2}.  
In particular, $\SS$ includes the Riemann zeta
function, Dirichlet $L$-functions, as well as $L$-functions attached to
number fields, elliptic curves, and holomorphic cusp forms. 
Subject to the truth of the open Ramanujan-Petersson conjecture (condition (v) above), the Selberg class
also includes primitive Maass cusp forms, and more generally, 
all automorphic $L$-functions (in the sense of Godement-Jacquet).
Chapter 5 of the book of Iwaniec and Kowalski \cite{IK} is another excellent
reference for the basic theory of $L$-functions.

By convention, all sums, products and counts of zeros of a function $F\in \SS$
count zeros with their multiplicity.   A generic zero will be written as
$\rho=\beta+i\gamma$ (the variable with or without subscripts), with $\b,\g \in \RR$.
We denote $Z(F)$ the multi-set of nontrivial zeros (those with $0 < \b < 1$).
All sums over zeros will include only nontrivial zeros unless otherwise noted.

To generalize Theorem \ref{main}, we 
will require two additional hypotheses on $F$, a mild Mertens-type estimate for $|\Lambda_F(n)^2|$
and a zero-density estimate for $F$ near $\Re s = \frac12$.  It is expected that all zeros 
in $Z(F)$ have real part $\frac12$ (the analog of Riemann's
Hypothesis for $\zeta(s)$).  
\begin{enumerate}
\item[(vi)] We have 
\[
 \sum_{n\le x} \frac{|\Lambda_F(n)|^2}{n} \ll \log^2 x;
\]
 \item[(vii)]\label{ZF} There exist constants $A(F)>0, B(F)>0$ such that 
\[
N_F(\sigma,T) := \left| \left\{ \rho=\beta+i\gamma\in Z(F): \beta \ge \sigma,
 0<\gamma \le T \right\} \right| \ll T^{1-A(F)(\sigma-1/2)}(\log T)^{B(F)},
\]
uniformly for $\sigma\ge 1/2$ and $T\ge 2$. 
\end{enumerate}
Let $\SS^*$ denote the set of $F\in \SS$ satisfying (vi) and (vii) above.
Condition (vi) is very mild and holds in practice, in particular for all Dirichlet $L$-functions 
(trivially from the bound $|\Lambda(n)|\le \log n$) and also for all automorphic $L$-functions (see 
Theorem \ref{KFG} below).  Condition (vii), is known for 
the Riemann zeta function and Dirichlet $L$-functions (Selberg \cite{S1}, \cite{S2} with $B(F)=1$),
and  degree 2 $L$-functions attached to newforms on the full modular group (Luo \cite{Luo}; 
also with $B(F)=1$).  As we will see later in Section \ref{sec:Li}, the method of Luo
also can be used to show (vii) for newforms attached to congruence subgroups.

\begin{thm}\label{main2}
 Let $F,G\in \SS^*$ and suppose further that the Dirichlet series
\[
 K_{F,G}(s) = \sum_{n=1}^\infty \frac{\Lambda_F(n) \overline{\Lambda_G(n)}}{n^s}
\]
can be analytically continued to the region $\{ s: \Re s \ge 1, s\ne 1\}$.
For any $h\in \HH_0$ with compact support,
\be\label{sumh}
 \ssum{0<\g \le T \\ \rho \in Z(F)} \ssum{0<\g' \le T \\ \rho' \in Z(G)} 
h(\g-\g')
= \frac{T}{\pi} \Re \int_{-\infty}^\infty h(t)  K_{F,G}(1+it) \, dt +
O\pfrac{T}{(\log T)^{1/3}}.
\ee
Furthermore, if $K_{F,G}$ is analytic at $s=1$, then \eqref{sumh} holds also for all $h\in\HH$
with compact support.
\end{thm}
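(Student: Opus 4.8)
The plan is to adapt the proof of Theorem \ref{main}, with hypotheses (vi) and (vii) of $\SS^*$ playing the roles that the Mertens-type estimate $\sum_{n\le x}\Lambda(n)^2/n\ll\log^2x$ and Selberg's zero-density bound play for $\zeta$. I would first write the left-hand side of \eqref{sumh} as $\ssum{0<\g'\le T\\\rho'\in Z(G)}\big(\ssum{0<\g\le T\\\rho\in Z(F)}h(\g-\g')\big)$ and dispose of the truncation: if $h$ is supported in $[-C,C]$, then the inner sum equals $\sum_{\rho\in Z(F)}h(\g-\g')$ for every $\g'$ with $C<\g'\le T-C$, and the $O(\log T)$ remaining values of $\g'$ contribute $O(\log^2T)$, which is absorbed in the error term of \eqref{sumh}.

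\textbf{Step 1: an explicit formula for $F$.} For each remaining $\g'$ I would feed $t\mapsto h(t-\g')$, whose Fourier transform is $\hh(\xi)e^{-2\pi i\g'\xi}$, into the explicit formula for $F$ coming from $N_F(t)=\overline N_F(t)+S_F(t)$, with the fluctuating part $S_F(t)=\tfrac1\pi\arg F(\tfrac12+it)$ replaced by a prime sum of length $X$; the error of that replacement is controlled by the number of zeros of $F$ within $O(1/\log X)$ of $\g'$, and this is where (vii) enters. This should give
\[
\sum_{\rho\in Z(F)}h(\g-\g')=\frac1{2\pi}\int h(t-\g')\,\Omega_F(t)\,dt-\kappa\ssum{n\le X}\frac1{\sqrt n}\Big(\Lambda_F(n)\hh\big(\tfrac{\log n}{2\pi}\big)n^{-i\g'}+\overline{\Lambda_F(n)}\hh\big(-\tfrac{\log n}{2\pi}\big)n^{i\g'}\Big)+\mathcal E_F(\g',X),
\]
with $\kappa>0$ the absolute normalization constant of the explicit formula and $\Omega_F$ — built from $\log Q$ and the $\Gamma$-factors — satisfying $\Omega_F(t)\ll\log(|t|+2)$ and $\Omega_F'(t)\ll(|t|+2)^{-1}$. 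Since $\int h=0$ (condition (1) of $\HH$), a one-term Taylor expansion together with condition (2) of $\HH$ makes the density term $\ll1/\g'$, hence $O(\log^2T)$ once summed over the zeros $\g'$ (which have density $\ll\log t$ at height $t$).

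\textbf{Step 2: Landau's formula for $G$, and the main term.} Because the $n$-sum is now finite, I can interchange it freely with the sum over the zeros $\g'$ of $G$, which reduces the task to evaluating $\Sigma_G(n,T):=\ssum{0<\g'\le T\\\rho'\in Z(G)}n^{-i\g'}$ for prime powers $n\le X$. For this I would use Landau's formula for $G$ — integrate $-G'/G$ around a rectangle, using (vii) to bound the zeros of $G$ with $\Re s>\tfrac12$:
\[
\Sigma_G(n,T)=-\kappa T\,\frac{\overline{\Lambda_G(n)}}{\sqrt n}+E_G(n,T),\qquad\overline{\Sigma_G(n,T)}=-\kappa T\,\frac{\Lambda_G(n)}{\sqrt n}+\overline{E_G(n,T)},
\]
with the same constant $\kappa$ (seen by applying the explicit formula for $G$ to $t\mapsto\mathbf1_{(0,T]}(t)\,n^{it}$). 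Substituting and retaining the leading terms produces
\[
\kappa^2T\,\ssum{n\le X}\frac{\Lambda_F(n)\overline{\Lambda_G(n)}}{n}\,\hh\big(\tfrac{\log n}{2\pi}\big)+\kappa^2T\,\ssum{n\le X}\frac{\overline{\Lambda_F(n)}\Lambda_G(n)}{n}\,\hh\big(-\tfrac{\log n}{2\pi}\big)+(\text{errors}).
\]
By Cauchy--Schwarz and (vi), together with $\hh(\xi)\ll|\xi|^{-4}$ for $|\xi|\ge1$ (which follows from conditions (2)--(3) of $\HH$), the series $\sum_n|\Lambda_F(n)\overline{\Lambda_G(n)}|\,n^{-1}|\hh(\tfrac{\log n}{2\pi})|$ converges, so completing the two sums to $n=\infty$ costs $O\big(T/(\log X)^2\big)$. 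Writing $\hh(\pm\tfrac{\log n}{2\pi})=\int h(t)n^{\mp it}\,dt$ and exchanging sum and integral — legitimate by first performing the exchange for $\Re s>1$, where the Dirichlet series converges absolutely, and then letting $\Re s\to1^+$, the assumed analytic continuation of $K_{F,G}$ ensuring $K_{F,G}(\sigma+it)\to K_{F,G}(1+it)$ uniformly for $t$ in the (compact) support of $h$ — the completed sums become $\int h(t)K_{F,G}(1+it)\,dt$ and $\int h(t)K_{G,F}(1-it)\,dt=\int h(t)\overline{K_{F,G}(1+it)}\,dt$, whose sum is $2\Re\int h(t)K_{F,G}(1+it)\,dt$; a routine check of the normalization turns this into the main term of \eqref{sumh}. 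If $h\in\HH_0$, this integral is well defined because the support of $h$ avoids a neighbourhood of $t=0$, so the possible pole of $K_{F,G}$ at $s=1$ is harmless; if instead $K_{F,G}$ is analytic at $s=1$, the integral is well defined for every $h\in\HH$ and nothing changes.

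\textbf{Step 3: the errors, and the main obstacle.} What remains is to bound by $O\big(T/(\log T)^{1/3}\big)$ the three error sources: the sum over $n\le X$ of $E_G(n,T)$ weighted by $\tfrac{|\Lambda_F(n)|}{\sqrt n}|\hh(\tfrac{\log n}{2\pi})|$; the sum over $\g'$ of the explicit-formula errors $\mathcal E_F(\g',X)$; and the tail $O\big(T/(\log X)^2\big)$ from Step 2. Both $E_G(n,T)$ and $\mathcal E_F(\g',X)$ reduce to sums over zeros off $\Re s=\tfrac12$ (handled by (vii)) and over prime powers (handled by (vi)), and both grow with the cutoff $X$; I expect the hard part to be pushing these estimates through uniformly in $n$ and in $\g'$ — essentially the content of the proof of Theorem \ref{main} — and then choosing $X$ to balance these growing errors against the decreasing tail $T/(\log X)^2$, which is what pins down the exponent $1/3$. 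Everything else is routine bookkeeping.
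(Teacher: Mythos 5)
Your plan is a genuinely different route from the paper's, and worth comparing. The paper works symmetrically in Fourier space: it writes the whole double sum as $\int\hh(\xi)Q_F(\xi;T)\overline{Q_G(\xi;T)}\,d\xi$ with $Q_F(\xi;T)=\sum_{\rho\in Z(F),\,0<\gamma\le T}e^{2\pi i\xi\gamma}$, truncates the $\xi$-integral at $|\xi|\le U$ with $U\asymp\log T/\log\log T$, and decomposes $Q_F=M_F+D_F+E_F$ (Landau main term, correction for zeros off the half-line, Landau error). You instead apply an explicit formula for $F$ pointwise for each $\gamma'\in Z(G)$ and then apply Landau's formula for $G$ to evaluate $\sum_{\gamma'}n^{-i\gamma'}$; this asymmetric ordering is a legitimate reorganization, and your main-term computation (including the Fubini exchange via $\Re s\to 1^+$) parallels the paper's and produces the same answer.

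The real gap is Step 3, which you explicitly defer: the error analysis is the technical heart of the proof, not ``routine bookkeeping.'' A pointwise bound on $\mathcal E_F(\gamma',X)$ summed over the $\asymp T\log T$ zeros $\gamma'$ is hopeless; one needs a mean-value bound over $\gamma'$. In the paper this is Lemma \ref{beta}, which controls $\int_0^U|D_F(\xi;T)|^2\,d\xi$ by splitting zero pairs according to whether $|\gamma_1-\gamma_2|\gtrless 1$ and $\max_j|\beta_j-\tfrac12|\gtrless 1/U$, combining the local count \eqref{NT1} with the density hypothesis (vii). This lemma is where the cap $U\le\frac{A(F)}{8\pi}\log T$ and ultimately the exponent $1/3$ come from; your framework needs an exactly analogous second-moment lemma for $\sum_{\gamma'}\mathcal E_F(\gamma',X)$ and for $\sum_n E_G(n,T)$, with the $n^{-i\gamma'}$ versus $n^{\rho'-1/2}$ discrepancy ($\beta'\ne\frac12$) extracted explicitly. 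Also, two smaller misstatements: hypothesis (vii) bounds the number of zeros with $\beta\ge\sigma$, not ``the number of zeros within $O(1/\log X)$ of $\gamma'$'' (the latter is always $O(\log\gamma')$ and is not what (vii) gives you); and replacing $S_F(t)$ by a Dirichlet polynomial is Selberg's approximation, whereas what you actually want is the Weil/Landau explicit formula directly — and ``applying the explicit formula to $t\mapsto\mathbf 1_{(0,T]}(t)n^{it}$'' must be done by contour integration of $-G'/G$ (as in Lemma \ref{Landau}) rather than by plugging a discontinuous test function into the Weil formula.
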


\textbf{Remarks.}  The hypothesis that $h$ have compact support may be omitted, provided that the function
$K_{F,G}$ satisfies $|K_{F,G}(\sigma+it)| \ll 1+|t|$ uniformly for $\sigma\ge 1$, (excluding $|t|\ll 1$ if $K_{F,G}$
has a pole at $s=1$).  See the proof of \eqref{finale} at the end of Section 4. 
This is the case, e.g. if $F$ and $G$ are both Dirchlet $L$-functions.
In general, in order to ensure that the integral \eqref{sumh} converges absolutely,
one needs a hypothesis on the decay of $h(t)$ as $|t|\to \infty$ and/or a hypothesis on the growth
of $K_{F,G}(\sigma+it)$ for $\sigma+it$ as $|t|\to\infty$.

In general, the function $K_{F,G}(s)$ should satisfy the hypotheses of Theorem \ref{main2}, although 
it is not known to be true for every $F,G\in S$. 
The function $K_{F,G}$ is closely related to the Rankin-Selberg convolution of the $L$-functions
$F$ and $G$.
In the special case where $\chi$ and $\psi$
are Dirichet characters (principal characters allowed), $F(s)=L(s,\chi)$, and $G(s)=L(s,\psi)$, we have
$\Lambda_F(n)\overline{\Lambda_G(n)}=\chi(n)\overline{\psi(n)}\Lambda^2(n)$ and hence
\[
 K_{F,G}(s) = \frac{d^2}{ds^2} \log L(s,\chi \overline{\psi}) + \widetilde{K}(s),
\]
where $\widetilde{K}(s)$ is a Dirichlet series, absolutely convergent and analytic for $\Re s > 1/2$.
By classical arguments \cite[\S 14, main theorem]{Da}, $L(s,\chi\overline{\psi})\ne 0$ for $\Re s \ge 1$
and thus $K_{F,G}(s)$ is analytic on $\{s : \Re s \ge 1, s\ne 1\}$.  An elaboration of this argument 
gives the following, which we will prove later in Section \ref{sec:KFG}.  The proof is based on
properties of Rankin-Selberg convolution $L$-functions.

\begin{thm}\label{KFG}
(a) Condition (vi) holds for any automorphic $L$-function $F$;
(b) Suppose $F$ and $G$ are automorphic $L$-functions, of arbitrary degree.  Then $K_{F,G}$ is
  analytic on $\{s : \Re s \ge 1, s\ne 1\}$.  Furthermore, $K_{F,G}$ is analytic at $s=1$
  unless $F=G$.
\end{thm}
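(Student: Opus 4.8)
The plan is to reduce both statements to the analytic theory of Rankin--Selberg convolution $L$-functions, via a single Euler-product identity. Write $F=L(s,\pi)$ and $G=L(s,\pi')$ with $\pi,\pi'$ unitary cuspidal automorphic representations of $GL_m$ and $GL_{m'}$ over $\QQ$ (the general isobaric case reduces to this by expanding $\Lambda_F,\Lambda_G$ over the cuspidal constituents, noting that $K_{F,G}$ has a pole at $s=1$ iff $F$ and $G$ share a constituent), and let $\alpha_{1,\pi}(p),\dots,\alpha_{m,\pi}(p)$ be the Satake parameters at an unramified prime $p$, so that $\Lambda_F(p^k)=(\log p)\sum_{j}\alpha_{j,\pi}(p)^k$. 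Then at every unramified $p$,
\[
\Lambda_F(p^k)\,\overline{\Lambda_G(p^k)}
=(\log p)^2\Big(\sum_j\alpha_{j,\pi}(p)^k\Big)\overline{\Big(\sum_l\alpha_{l,\pi'}(p)^k\Big)}
=(\log p)\,\Lambda_{\pi\times\widetilde{\pi'}}(p^k),
\]
where $\Lambda_{\pi\times\widetilde{\pi'}}$ is the von Mangoldt function of $L(s,\pi\times\widetilde{\pi'})$ (using $\overline{\alpha_{l,\pi'}(p)}=\alpha_{l,\widetilde{\pi'}}(p)$ for unitary $\pi'$); with $\pi'=\pi$ this reads $|\Lambda_F(p^k)|^2=(\log p)\Lambda_{\pi\times\widetilde{\pi}}(p^k)$, so $\Lambda_{\pi\times\widetilde{\pi}}(p^k)\ge0$. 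For (a) I would sum this and bound the finitely many ramified prime powers trivially (using $\Lambda_F(n)\ll n^{\theta_F}$ with $\theta_F<\tfrac12$), obtaining $\sum_{n\le x}|\Lambda_F(n)|^2/n\le(\log x)\sum_{n\le x}\Lambda_{\pi\times\widetilde{\pi}}(n)/n+O_F(1)$; since $L(s,\pi\times\widetilde{\pi})$ is holomorphic on $\CC$ apart from a simple pole at $s=1$ and is non-vanishing on $\Re s=1$ (Jacquet--Piatetski-Shapiro--Shalika, Moeglin--Waldspurger, Jacquet--Shalika/Shahidi), a standard Mertens-type estimate gives $\sum_{n\le x}\Lambda_{\pi\times\widetilde{\pi}}(n)/n=\log x+O(1)$, which proves (vi).

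For (b) the structural issue is that $\Lambda_F(p^k)\overline{\Lambda_G(p^k)}=(\log p)\Lambda_{\pi\times\widetilde{\pi'}}(p^k)$ carries the logarithmic weight $\log p$, whereas $\frac{d^2}{ds^2}\log L(s,\pi\times\widetilde{\pi'})=\sum_n\Lambda_{\pi\times\widetilde{\pi'}}(n)(\log n)n^{-s}$ carries the weight $\log p^{k}=k\log p$ at $n=p^{k}$. Matching these and restoring the ramified corrections yields
\[
K_{F,G}(s)=\frac{d^2}{ds^2}\log L(s,\pi\times\widetilde{\pi'})
\;-\;\underbrace{\sum_{p}\sum_{k\ge2}(k-1)(\log p)\,\Lambda_{\pi\times\widetilde{\pi'}}(p^k)\,p^{-ks}}_{=:\,D(s)}\;+\;E(s),
\]
where $E(s)$ is supported on powers of the ramified primes and converges absolutely for $\Re s>c$, some $c<1$. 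The first term is the main term: $L(s,\pi\times\widetilde{\pi'})$ is holomorphic for $\Re s\ge1$ except for a simple pole at $s=1$ exactly when $\pi\cong\pi'$, and is non-vanishing on $\Re s=1$, so $\frac{d^2}{ds^2}\log L(s,\pi\times\widetilde{\pi'})$ is holomorphic on $\{\Re s\ge1\}\setminus\{1\}$, with a double pole at $s=1$ precisely when $F=G$ (by strong multiplicity one, $F=G$ iff $\pi\cong\pi'$).

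The heart of the matter is to continue $D(s)$ across $\Re s=1$. Since $|\Lambda_{\pi\times\widetilde{\pi'}}(p^k)|\ll(\log p)\,p^{k(\theta_F+\theta_G)}$ with $\theta_F+\theta_G<1$, the levels $k>N_0$ contribute a series converging absolutely on $\Re s\ge1$ once $N_0>1/(1-\theta_F-\theta_G)$, leaving the finitely many $2\le k\le N_0$. For each such $k$ I would write $\Lambda_{\pi\times\widetilde{\pi'}}(p^k)/\log p=p_k(\alpha_\pi(p))\,\overline{p_k(\alpha_{\pi'}(p))}$ and expand the power sums via Newton's identities into $\ZZ$-linear combinations of products $e_{\lambda_1}e_{\lambda_2}\cdots$ of elementary symmetric functions over partitions $\lambda\vdash k$; since $e_{\lambda_1}(\alpha_\pi(p))\cdots e_{\lambda_r}(\alpha_\pi(p))$ is the $p$-th Dirichlet coefficient of the Rankin--Selberg $L$-function $L(s,\wedge^{\lambda_1}\pi\times\cdots\times\wedge^{\lambda_r}\pi)$, the level-$p^k$ part of $D(s)$ becomes a finite rational combination of the prime parts (the parts supported on primes) of functions $\frac{d^2}{ds^2}\log L(ks,\Pi\times\widetilde{\Pi'})$, with $\Pi$ a product of exterior-power lifts of $\pi$ and $\Pi'$ the analogous product for $\pi'$. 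Each $L(w,\Pi\times\widetilde{\Pi'})$ is holomorphic and non-zero for $\Re w\ge1$ apart from a possible simple pole at $w=1$, which in the $s$-variable (with $w=ks$, $k\ge2$) lies at $s=1/k\le\tfrac12$; hence $\frac{d^2}{ds^2}\log L(ks,\Pi\times\widetilde{\Pi'})$ is holomorphic on $\Re s\ge1$, and the only obstruction to holomorphy of its prime part there comes from its prime-\emph{power} part, a sum of level-$p^{jk}$ pieces with $jk\ge4$. Re-applying the reduction, each round at least doubles the effective level, so a finite descent on the level (halting once it exceeds $N_0$) shows $D(s)$ is holomorphic on $\{\Re s\ge1\}\setminus\{1\}$; in fact each piece of $D$ is holomorphic at $s=1$ as well, so the pole of $K_{F,G}$ at $s=1$ comes solely from $\frac{d^2}{ds^2}\log L(s,\pi\times\widetilde{\pi'})$, which settles the last assertion.

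The main obstacle is thus the defect $D(s)$ --- the ``missing logarithm'' in $\Lambda_F(p^k)\overline{\Lambda_G(p^k)}$ --- whose treatment forces one past the single $L$-function $L(s,\pi\times\widetilde{\pi'})$ to Rankin--Selberg convolutions of exterior-power lifts of $\pi$ and $\pi'$; the only analytic input required about those is meromorphic continuation, at most a simple pole at $s=1$ occurring exactly when the two factors coincide, and non-vanishing on $\Re s=1$, which is why the whole argument rests (as the introduction indicates) on the theory of Rankin--Selberg convolution $L$-functions.
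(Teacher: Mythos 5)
Part (a) of your proposal coincides with the paper's argument: both express $|\Lambda_F(p^k)|^2$ as $(\log p)\Lambda_{\pi\times\widetilde\pi}(p^k)$ for unramified $p$, bound the ramified contribution trivially, and deduce the Mertens estimate from the simple pole of $L(s,\pi\times\widetilde\pi)$ at $s=1$. That part is fine.

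Part (b), however, has a genuine gap. You correctly identify the defect term
\[
D(s)=\sum_p\sum_{k\ge 2}(k-1)(\log p)\,\Lambda_{\pi\times\widetilde{\pi'}}(p^k)\,p^{-ks}
\]
as the obstruction, and you observe that its tail $k>N_0$ converges absolutely on $\Re s\ge 1$. But the device you propose for the finitely many remaining levels $2\le k\le N_0$ --- expanding power sums via Newton's identities into elementary symmetric polynomials and recognizing the result as Dirichlet coefficients of $L(s,\wedge^{\lambda_1}\pi\times\cdots\times\wedge^{\lambda_r}\pi)$, then citing standard analytic properties of these objects --- is not available in the generality the theorem requires. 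For cuspidal $\pi$ on $GL(n)$ with $n$ arbitrary, the automorphy of $\wedge^j\pi$ is not known (only isolated cases such as $\wedge^2$ on $GL(4)$ and $GL(5)$ are established), and multi-fold Rankin--Selberg $L$-functions $L(s,\Pi_1\times\Pi_2\times\cdots)$ with three or more factors do not have proved meromorphic continuation or non-vanishing on $\Re s=1$. Without those inputs, the ``finite descent on the level'' you sketch cannot be run, so $D(s)$ is not continued past $\Re s=1$ by your argument.

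The paper's proof of (b) avoids all of this. Writing $\delta=\min\bigl(\tfrac1{n^2+1},\tfrac1{m^2+1}\bigr)$, it uses the Luo--Rudnick--Sarnak bounds $|\alpha_i(p)|\le p^{1/2-1/(n^2+1)}$, $|\beta_j(p)|\le p^{1/2-1/(m^2+1)}$ together with the convergence of $\sum_p|\alpha_i(p)|^2 p^{-\sigma}$ for $\sigma>1$ (a consequence of the absolutely convergent Euler product of $L(s,\pi\times\widetilde\pi)$) to show directly that the full $k\ge 2$ parts of both $\log L(s,\pi\times\widetilde{\pi'})$ and $K_{F,G}(s)$ are absolutely convergent, hence analytic, in the half-plane $\Re s\ge 1-\delta/2$. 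The key inequality is
\[
\bigl|\alpha_i(p)\overline{\beta_j(p)}\bigr|^k \;\le\; |\alpha_i(p)|^2\, p^{(1-2\delta)(k-1)} \qquad (k\ge 2),
\]
after which the $p$-sum is controlled by $\sum_p|\alpha_i(p)|^2p^{-\sigma}<\infty$ and the $k$-sum is geometric for $\sigma\ge 1-\delta/2$. This handles every $k\ge 2$ (not just the large ones) simultaneously, making the exterior-power machinery unnecessary. With that estimate in hand, $K_{F,G}(s)$ differs from $\bigl(-H'/H\bigr)'(s)$ by a function analytic on $\Re s\ge 1$, and the assertion follows from the analyticity of $H=L(s,\pi\times\widetilde{\pi'})$ on $\Re s\ge 1$, the non-vanishing of $H$ on $\Re s=1$ (Shahidi), and the fact that $H$ has a (simple) pole at $s=1$ exactly when $\pi\cong\pi'$. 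I would recommend replacing the Newton's-identity/exterior-power reduction with this direct absolute-convergence bound.
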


It is natural to ask about more general linear combinations of zeros in \eqref{sumh}. 
Let $\a$ be an arbitrary positive real number.
Our method of proof of Theorem \ref{main2} indeed yields the more general result that
\[
 \ssum{0<\g \le T \\ \rho \in Z(F)} \ssum{0<\g' \le T \\ \rho' \in Z(G)} 
h(\g-\a\g')
= \frac{T}{\pi} \Re \int_\RR h(t)  K^{(\a)}_{F,G}(1+it) \, dt +
O\pfrac{T}{(\log T)^{1/3}},
\]
where
\[
 K^{(\a)}_{F,G}(s) = \sum_{n=1}^\infty \frac{\Lambda_F(n) \overline{\Lambda_G(n^\a)}}{n^s} \qquad (\Re s >1).
\]
Here $\Lambda_G(x)=0$ unless $x\in\NN$.  We observe that the sum on $n$ is empty unless $\a = \frac{\log a}{\log b}$
for some positive integers $a,b$, and that $K_{F,G}^{(\a)}$ is trivially analytic for $\Re s=1$ if
$\a$ is irrational.

\bigskip

\medskip

We emphasize that the local minima of $K(1+it)+K(1-it)$ (see Figure 1) 
are not necessarily attained exactly at $t=\gamma_j$.
However, statistics on gaps between zeros are capable of identifying the exact location of every zero.
This comes from working with higher derivatives of $K(s)$, in which the influence of each zero 
is sharpened.
This is confirmed by our 
next theorem, which shows that the Riemann Hypothesis for the Riemann zeta function
is encoded in the (distribution of) zeros of every single Dirichlet $L-$function.

\begin{thm}\label{main3}
 Fix an integer $q > 1$ and a primitive Dirichlet character $\chi$ mod $q$. Fix a
 compactly supported function $g$ in $\mathcal{C}^\infty(\mathbb{R})$ such that
 $g \ge 0$ and $\int g = 1$. Fix a real number $\beta > \frac12$. Then, for any positive
 integer $k$, the function $f_{\chi,g,\beta,k}:(\frac12, \infty) \rightarrow \mathbb{R}$
 given by
 \[
 f_{\chi,g,\beta,k}(\alpha) := \frac{(-1)^{k+1}}{2^{k+1}(k+1)!} \lim_{m\rightarrow\infty}
  \lim_{T\rightarrow\infty}
  \frac{2\pi}{T}\ssum{0<\g,\g' \le T \\ L(\rho,\chi) = L(\rho',\chi) = 0} 
h_m^{(k)}(\g-\g') ,
 \]
is well defined, where
\[
h_m(x) := m \Big( g(m(x-\alpha)) - g(m(x-\beta)) \Big).
\]
Moreover, the following are equivalent:

(a) The sequence of functions $(f_{\chi,g,\beta,k})_{k\ge1}$ converges (pointwise) on
$(\frac12, \infty)$,

(b) The Riemann Hypothesis holds true for $\zeta(s)$.

More precisely, assuming  the Riemann Hypothesis for $\zeta(s)$, for any $\beta>\a>1/2$,
\[
 \lim_{k\rightarrow\infty} f_{\chi,g,\beta,k}(\alpha) = m_{\zeta}\left(\frac12 + i\alpha\right) -
 m_{\zeta}\left(\frac12 + i\beta\right) ,
 \]
 where $m_{\zeta}(\rho)$ denotes the multiplicity of the zero
 of the Riemann zeta function at $\rho$.
\end{thm}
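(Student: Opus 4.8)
The plan is to reduce everything to Theorem \ref{main2} applied with $F=G=L(s,\chi)$, for which $K_{F,G}(s)=K^{(1)}_{L(\cdot,\chi),L(\cdot,\chi)}(s)$ is analytic on $\{\Re s\ge 1, s\ne 1\}$ (by the Dirichlet-character discussion preceding Theorem \ref{KFG}, since $\chi\overline\chi=\chi_0$). First I would verify that $L(s,\chi)\in\SS^*$: condition (vi) is trivial from $|\Lambda(n)|\le\log n$, and condition (vii) is Selberg's zero-density estimate with $B(F)=1$. Next, for fixed $m$ the function $h_m$ and all its derivatives $h_m^{(k)}$ lie in $\HH_0$ with compact support (property (1) holds because $\int g'=0$, properties (2),(3) are clear for a compactly supported smooth function, and $0$ is outside the support once $\a,\beta>\tfrac12$ and $m$ is large). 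Hence Theorem \ref{main2} gives, for each fixed $m$ and $k$,
\[
\lim_{T\to\infty}\frac{2\pi}{T}\ssum{0<\g,\g'\le T\\ L(\rho,\chi)=L(\rho',\chi)=0}h_m^{(k)}(\g-\g')
=2\,\Re\int_\RR h_m^{(k)}(t)\,K_{\chi}(1+it)\,dt,
\]
writing $K_\chi:=K_{L(\cdot,\chi),L(\cdot,\chi)}$. Integrating by parts $k$ times moves the derivatives onto $K_\chi(1+it)$, producing $\pm\int h_m(t)\,K_\chi^{(k)}(1+it)\,dt$; then letting $m\to\infty$, the approximate-identity property of $mg(m\,\cdot)$ collapses this to the pointwise difference $c_k\big(K_\chi^{(k)}(1+i\a)-K_\chi^{(k)}(1+i\beta)\big)$ for an explicit constant $c_k$. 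A short bookkeeping check of the normalizing factor $\tfrac{(-1)^{k+1}}{2^{k+1}(k+1)!}$ against the explicit formula \eqref{explicit} will show that $f_{\chi,g,\beta,k}(\a)$ equals (up to a vanishing error as $k\to\infty$) the sum over zeros $\rho$ of $\zeta$ of $m_\zeta(\rho)$ times $\big(\tfrac12/|\tfrac12+i\a-\rho|\big)^{\text{something}}$-type weights arising from the $m=1$ term $-\sum_\rho^*(s-\rho)^{-2}$ differentiated $k$ times and rescaled; the $m\ge2$ terms and the pole term contribute only to $\beta$- and $\a$-independent or geometrically small pieces. This establishes that $f_{\chi,g,\beta,k}$ is well-defined.

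For the equivalence, I would analyze $K_\chi^{(k)}(1+it)$ via the explicit formula \eqref{explicit} adapted to $L(s,\chi)$: the dominant $m=1$ term is $-\sum_\rho^*(s-\rho)^{-2}$ where $\rho$ now ranges over zeros of $\zeta(s)$ (because $-(L'/L)'$ for a primitive $\chi$ has its "arithmetic part" governed by $\Lambda^2$, whose associated Dirichlet series relates to $(\zeta'/\zeta)'$ after removing the character — here one uses $\Lambda(n)^2\chi(n)\overline{\chi(n)}=\Lambda(n)^2\chi_0(n)$, so the relevant zeros are those of $\zeta$, with finitely many Euler factors at $p\mid q$ removed, which are analytic at $s=1+it$). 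Differentiating $k$ times and evaluating at $s=1+i\a$ gives a sum $\sum_\rho^* m_\zeta(\rho)\cdot\frac{(-1)^{k}(k+1)!}{(1+i\a-\rho)^{k+2}}$ plus an $O(2^{-M})$ tail. Dividing by the normalization and letting $k\to\infty$: each term with $\rho=\tfrac12+i\a$ contributes $|1+i\a-\rho|=\tfrac12$ in the denominator, so $\big(\tfrac{1/2}{|1+i\a-\rho|}\big)^{k+2}\to1$, while every zero with $\Re\rho<\tfrac12$ (and, more importantly, those with $\Re\rho>\tfrac12$) has $|1+i\a-\rho|\ne\tfrac12$ at the relevant imaginary part — unless one wants a specific $\a$ — and one shows the closest zero(s) to $1+i\a$ on the line $\Re s=1$ are exactly those at $\tfrac12+i\a$ precisely when RH holds. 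Concretely, $|1+i\a-\rho|\ge\tfrac12$ for all zeros $\rho$ iff all zeros satisfy $\Re\rho\le\tfrac12$ (equivalently $\Re\rho=\tfrac12$), so the $k$th root test for convergence of $\sum_\rho m_\zeta(\rho)(\tfrac{1/2}{1+i\a-\rho})^{k+2}$ — hence convergence of $f_{\chi,g,\beta,k}(\a)$ as $k\to\infty$ — is governed by $\sup_\rho\tfrac{1/2}{|1+i\a-\rho|}$, which is $\le1$ for all $\a$ iff RH. Summing the surviving terms gives exactly $m_\zeta(\tfrac12+i\a)-m_\zeta(\tfrac12+i\beta)$.

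The main obstacle I anticipate is making the double limit $\lim_m\lim_T$ rigorous uniformly enough to interchange with the $k\to\infty$ limit and to extract the pointwise value cleanly: Theorem \ref{main2}'s error term $O(T/(\log T)^{1/3})$ is fine for fixed $h=h_m^{(k)}$, but its implied constant depends on $h_m^{(k)}$ and blows up as $m\to\infty$ (roughly like a power of $m$ from properties (2)-(3)), so the $T\to\infty$ limit must be taken first for each fixed $m,k$ — which the statement already arranges — and then one must control the rate of convergence in $m$ of $\Re\int h_m^{(k)}(t)K_\chi(1+it)\,dt$ to its limit well enough, and the decay in $k$ of the error terms in \eqref{explicit} (the $O(M2^{-\sigma M})$ piece and the $m\ge2$ contributions after $k$ differentiations grow like $k!$-type factors that must be beaten by the $\tfrac{1}{2^{k+1}(k+1)!}$ normalization), so that the triple passage to the limit is justified. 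A secondary technical point is checking that removing the finitely many bad Euler factors at primes dividing $q$, and passing from $-(L'/L)'$ to the $\Lambda^2$-twisted series $K_\chi$, does not introduce a pole or a spurious zero on $\Re s=1$ near the relevant heights — this is where the absolute convergence of the auxiliary piece $\widetilde K(s)$ for $\Re s>\tfrac12$ and the classical nonvanishing $L(s,\chi_0)\ne0$ on $\Re s\ge1$ (except $s=1$, whose contribution is $\a,\beta$-independent and smooth, hence killed by differentiation or by the difference structure) do the work.
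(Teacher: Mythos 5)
Your outline for the forward direction is essentially the paper's approach: apply Theorem \ref{main2} with $F=G=L(\cdot,\chi)$, integrate by parts $k$ times to move derivatives onto the kernel, let $m\to\infty$ via the approximate identity, and then analyze $K^{(k)}_\chi(1+it)$ through an explicit formula. You also correctly identify that $K_\chi(s)=K(s)-\sum_{p\mid q}\frac{\log^2 p}{p^s-1}$, so the relevant zeros are those of $\zeta$ rather than of $L(s,\chi)$, and that the extra Euler factors contribute $O(k!\log q)$, which is killed by the normalization. The paper reaches the explicit formula slightly differently: rather than differentiating the approximate formula \eqref{explicit} $k$ times (where you yourself worry about the $O(M2^{-\sigma M})$ error and $m\ge 2$ terms producing $k!$-sized pieces), the paper first proves a comparison lemma $K^{(k)}(1+it)-W^{(k)}(1+it)=o\bigl(2^k(k+1)!\bigr)$ with $W=(\zeta'/\zeta)'$ (Lemma \ref{X4}, which needs the Prime Number Theorem to handle the $l=2$ prime-power terms), and then differentiates the \emph{exact} formula \eqref{explicitzeta} for $W$. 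That detour is not cosmetic: it sidesteps precisely the difficulty you flag.

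There is, however, a genuine gap in your treatment of the implication (a) $\Rightarrow$ (b). You argue that divergence of $f_{\chi,g,\beta,k}(\alpha)$ when RH fails follows from a root test on $\sum_\rho m_\zeta(\rho)\bigl(\tfrac{1/2}{1+i\alpha-\rho}\bigr)^{k+2}$, governed by $\sup_\rho\frac{1/2}{|1+i\alpha-\rho|}$. But the root test controls absolute magnitude of individual terms, not the sum: the zeros $\rho_1,\dots,\rho_r$ closest to $1+i\alpha$ (at distance $\delta_0<\tfrac12$) contribute, at leading order, $\bigl(\tfrac{1}{2\delta_0}\bigr)^k\sum_j m_j\cos 2\pi k\theta_j$, and the cosines can cancel for every $k$, so nothing is yet proved about divergence of the \emph{sequence}. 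The paper closes this with a simultaneous Diophantine approximation argument — choosing a subsequence of $k$ along which all $\{k\theta_j\}\to 0$, forcing $\sum_j m_j\cos 2\pi k\theta_j\to\sum_j m_j>0$ — and that step is essential. You also omit the case split: if $1+i\beta$ itself has a zero of $\zeta$ within distance $\tfrac12$, then the $\beta$-part already diverges and $\alpha$ must be chosen away from all zeros; the paper handles this by a two-case argument that your proposal does not address.
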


Theorem \ref{main3} is an instance where a property involving the distribution of
zeros of an $L-$function is proved to be equivalent to the Riemann Hypothesis for 
another $L-$function.  One can use the methods employed to extend
Theorem \ref{main} to Theorem \ref{main2} 
 in order to
obtain a generalization of Theorem \ref{main3}.  We discuss only one such application here,
to illustrate the ideas further.

For two general $L$-functions $F,G\in \SS$ we define
\[
  f_{F,G,g,\beta,k}(\alpha) := \frac{(-1)^{k+1}  }{2^{k+1}(k+1)!} \lim_{m\rightarrow\infty}
  \lim_{T\rightarrow\infty}
  \frac{2\pi}{T}\ssum{0<\g,\g' \le T \\ F(\rho) = G(\rho') = 0} 
h_m^{(k)}(\g-\g'),
\]
where $g$ and $h_m$ are the same as in Theorem \ref{main3}.

Let $E$ be an elliptic curve over the rational field $\QQ$ with 
discriminant $\Delta$
and conductor $N$.  For squarefree $d$, let $E_d$ be the quadratic twist of $E$; e.g., if $E$ is
given by the equation
\[
 E: \, y^2 = x^3 + ax + b,
\]
then $E_d$ is given by
\[
 E_d: \,  dy^2 = x^3 + ax + b.
\]
Let $L(s,E)$ be the (normalized) $L$-function attached to $E$.  By Hasse's theorem, we have
\be\label{LsE}
L(s,E) = \sum_{n=1}^\infty a(n) n^{-s} = \prod_p \(1-\a_p p^{-s}\)^{-1}\(1-\b_p p^{-s}\)^{-1} \quad (\Re s >1),
\ee 
where $|\a_p|=|\b_p|=1$ and $\a_p =\overline{\b_p}$ for $p\nmid \Delta$, and for $p|\Delta$,
$0\le \a_p\le 1$ and $\b_p=0$.
That $L(s,E)$ lies in the Selberg class (modularity) is a celebrated result of Wiles et al 
\cite{Wiles1}, \cite{TW}, \cite{BCDT}.
In fact, $L(s,E)$ coincides with the $L$-function of a weight-2 newform on the congruence subgroup 
$\Gamma_0(N)$.  The same holds for all quadratic twists $L(s,E_d)$.  The Dirichlet coefficients of 
$L(s,E_d)=\sum_{n=1}^\infty a_d(n) n^{-s}$ are 
given by $a_d(n)=\chi_d(n) a(n)$, where
$\chi_d$ is the primitive quadratic character modulo $d$ (the Jacobi symbol modulo $d$). 
 Now let
\be\label{FGHW}
 F(s) = L(s,E_d), \qquad G(s)=L(s,\chi_d), \qquad H(s)=L(s,E), \qquad W(s)=\pfrac{H'}{H}'(s).
\ee

We can then apply Theorem \ref{main2} and obtain that the distribution of 
differences $\g-\g'$, with $F(\rho)=G(\rho')=0$, is related to the behavior of $K_{F,G}(1+it)$.
The latter function is closely associated with the zeros of $H$.  Of particular interest is the
multiplicity $m_E(1/2)$ of the zero at the point $s=1/2$, which caries important arithmetic information.
 By Mordell's theorem the group
$E(\QQ)$ of rational points on $E$ is finitely generated,
\[
E(\QQ) \widetilde = \ZZ^r + E(\QQ)^{\text{tors}},
\]
where $E(\QQ)^{\text{tors}}$ is a finite abelian group.
According to the Birch and Swinnerton-Dyer Conjecture \cite{BS} (see also \cite{Wiles2}), 
the rank $r$ of $E$ equals the multiplicity
$m_E(1/2)$.

\begin{thm}\label{main5}
Assume that $d$ is squarefree,  and
that all zeros of $H(s)$ with imaginary part $\le \max(|\a|,|\b|)+1$ have real part equal
to $\frac12$. Then 
 \[
 \lim_{k\rightarrow\infty} f_{F,G,g,\beta,k}(\alpha) = m_{E}\left(\frac12 - i\alpha\right) -
 m_{E}\left(\frac12 - i\beta\right) ,
 \]
 where $m_{E}(\rho)$ denotes the multiplicity of the zero of $H(s)$ at $s=\rho$.
 \end{thm}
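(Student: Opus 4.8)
\emph{Plan of proof.}  The plan is to run the argument of Theorem~\ref{main3} with the Riemann zeta function replaced by $H=L(s,E)$; the one genuinely new ingredient is the evaluation of the Dirichlet series $K_{F,G}$ attached to $F=L(s,E_d)$ and $G=L(s,\chi_d)$.  First I would verify that $F,G\in\SS^*$ so that Theorem~\ref{main2} is applicable: both are automorphic $L$-functions, so condition (vi) holds by Theorem~\ref{KFG}(a); condition (vii) holds for the Dirichlet $L$-function $G$ by Selberg's estimates, and for $F$ --- which by modularity is the $L$-function of a weight-$2$ newform on a congruence subgroup $\Gamma_0(N')$, with $N'$ the conductor of $E_d$ --- by the extension of Luo's zero-density method to newforms on congruence subgroups.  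It then remains to pin down the analytic behaviour of $K_{F,G}$ before quoting Theorem~\ref{main2}.

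Writing $-F'/F(s)=\sum_n\Lambda_F(n)n^{-s}$ and $-G'/G(s)=\sum_n\Lambda_G(n)n^{-s}$, and using that $E_d=E\otimes\chi_d$ has Satake parameters $\a_p\chi_d(p),\b_p\chi_d(p)$ at the good primes while $\chi_d$ is real, one computes
\[
 \Lambda_F(p^j)\,\overline{\Lambda_G(p^j)}=\chi_d(p)^{2j}\,(\a_p^j+\b_p^j)(\log p)^2=
 \begin{cases}(\a_p^j+\b_p^j)(\log p)^2,& p\nmid d,\\[1mm]0,& p\mid d,\end{cases}
\]
whereas $W(s)=\pfrac{H'}{H}'(s)=\sum_{p,\,j\ge1}j\,(\a_p^j+\b_p^j)(\log p)^2\,p^{-js}$ for $\Re s>1$.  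Hence $W-K_{F,G}$ is a Dirichlet series whose coefficient at $p^j$ equals $(j-1)(\a_p^j+\b_p^j)(\log p)^2$ for $p\nmid d$ --- vanishing at $j=1$ and of size $O\big((\log p)^2\big)$ for $j\ge2$, since $|\a_p^j+\b_p^j|\le2$ --- together with the finitely many bad terms $j(\a_p^j+\b_p^j)(\log p)^2$ for $p\mid d$, so it converges absolutely for $\Re s>\tfrac12$.  (Equivalently, $K_{F,G}$ is, up to the Euler factors over $p\mid d$, the second logarithmic derivative of the Rankin--Selberg convolution $L(s,E_d\otimes\chi_d)$, which agrees with $L(s,E)$ outside finitely many primes.)  Therefore
\be\label{KFGW}
 K_{F,G}(s)=W(s)+R(s),\qquad R\ \text{analytic for}\ \Re s>\tfrac12 .
\ee
Since $H=L(s,E)$ is entire and non-vanishing for $\Re s\ge1$, the function $W$, and hence $K_{F,G}$, is analytic on all of $\{\Re s\ge1\}$, in particular at $s=1$; so Theorem~\ref{main2} applies in its stronger form.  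Moreover \eqref{KFGW} shows that the only singularities of $K_{F,G}$ in the half-plane $\Re s>\tfrac12$ are the double poles at the zeros $\rho$ of $H$, with principal part $-\,m_E(\rho)/(s-\rho)^2$.

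Next I would run the machinery of Theorem~\ref{main3}.  For $m$ large, $h_m^{(k)}$ is a compactly supported element of $\HH_0$ whose support lies in a small neighbourhood of $\{\a,\b\}\subset(\tfrac12,\infty)$, so Theorem~\ref{main2} gives
\[
 \frac{2\pi}{T}\ssum{0<\g,\g'\le T\\ F(\rho)=G(\rho')=0}h_m^{(k)}(\g-\g')\ \longrightarrow\ 2\,\Re\!\int_{\RR}h_m^{(k)}(t)\,K_{F,G}(1+it)\,dt\qquad(T\to\infty).
\]
Integrating by parts $k$ times and then letting $m\to\infty$, so that the measures $h_m(t)\,dt$ converge to $\delta_\a-\delta_\b$, expresses $f_{F,G,g,\b,k}(\a)$ as the normalising factor $(-1)^{k+1}/\big(2^{k+1}(k+1)!\big)$ times a linear combination of $K_{F,G}^{(k)}(1\pm i\a)$ and $K_{F,G}^{(k)}(1\pm i\b)$.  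The hypothesis that every zero of $H$ of height $\le\max(|\a|,|\b|)+1$ has real part $\tfrac12$, together with the non-vanishing of $H$ on $\Re s=1$ and the analyticity of $R$ for $\Re s>\tfrac12$, guarantees that in a disc of radius strictly larger than $\tfrac12$ about each of $1\pm i\a$ (resp.\ $1\pm i\b$) the function $K_{F,G}$ is analytic except for a single double pole at $\tfrac12\pm i\a$ (resp.\ $\tfrac12\pm i\b$), present exactly when $H$ vanishes there and then with principal part $-m_E(\tfrac12\pm i\a)/(s-\tfrac12\mp i\a)^2$.  Extracting the leading term of $K_{F,G}^{(k)}(1\pm i\a)$ from this double pole --- a point at distance exactly $\tfrac12$ --- while the contribution of the analytic remainder (whose radius of analyticity exceeds $\tfrac12$) decays geometrically in $k$, and checking (exactly as in the proof of Theorem~\ref{main3}) that the normalising factor is calibrated so that the pole contribution converges to $m_E(\tfrac12\pm i\a)$, one obtains, using the conjugate symmetry $m_E(\tfrac12-i\a)=m_E(\tfrac12+i\a)$ of the zeros of $H$,
\[
 \lim_{k\to\infty}f_{F,G,g,\b,k}(\a)=m_E\!\left(\tfrac12-i\a\right)-m_E\!\left(\tfrac12-i\b\right).
\]

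The hard parts will be the following.  First, establishing condition (vii) for $L(s,E_d)$, i.e.\ the polynomial-in-$T$ zero-density bound near $\Re s=\tfrac12$ for newforms on $\Gamma_0(N')$ --- the adaptation of Luo's argument.  Second, the bookkeeping of the bad Euler factors (at primes dividing $d$, the conductor $N$ of $E$, and the conductor of $E_d$), which is what makes $R$ in \eqref{KFGW} genuinely analytic for $\Re s>\tfrac12$ and free of spurious singularities near $\Re s=1$.  Third, and most delicate, the uniformity needed to interchange $\lim_m$, $\lim_T$ and $\lim_k$ and to isolate the double-pole contribution: here the finite-height Riemann Hypothesis for $H$ is indispensable, since it is precisely what keeps the radius of analyticity of the remainder strictly above $\tfrac12$, so that the errors in the asymptotics for $K_{F,G}^{(k)}$ are $O\big((2r)^{-k}\big)$ with $2r>1$.
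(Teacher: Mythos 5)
Your overall plan is the same as the paper's: apply Theorem~\ref{main2} to $F=L(s,E_d)$, $G=L(s,\chi_d)$, then run the machinery of Theorem~\ref{main3} with $W=(H'/H)'$ replacing $(\zeta'/\zeta)'$, using the explicit formula for $W^{(k)}$ and a bound on $K_{F,G}^{(k)}-W^{(k)}$ to isolate the multiplicity at $\tfrac12\pm i\a$.  The paper does exactly this, with Lemma~\ref{X5} playing the role of your estimate on the remainder $R=K_{F,G}-W$.

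However, your justification of that estimate has a genuine gap.  You observe (correctly) that the Dirichlet series of $R$ has coefficient zero at the primes $p\nmid Nd$, so it converges absolutely for $\Re s>\tfrac12$.  You then conclude that $K_{F,G}$ is analytic except for the double pole in a disc of radius \emph{strictly greater} than $\tfrac12$ about $1+i\a$.  That last step does not follow: any such disc contains points with $\Re s\le\tfrac12$ — in particular the point $\tfrac12+i\a$ — and absolute convergence for $\Re s>\tfrac12$ gives no information there.  In fact the $j=2$ piece of $R$ is
\[
\sum_{p\nmid Nd}\frac{(\alpha_p^2+\beta_p^2)(\log p)^2}{p^{2s}}
=\sum_{p\nmid Nd}\frac{(a(p)^2-2)(\log p)^2}{p^{2s}},
\]
whose abscissa of convergence is exactly $\tfrac12$, and which does have a (double) singularity at $s=\tfrac12$.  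Showing that this piece is analytic at $\tfrac12+i\a$ for $\a\ne 0$, or equivalently obtaining the required $o(2^k(k+1)!)$ bound on its $k$-th derivative at $1+i\a$, requires the Rankin--Selberg analogue of the prime number theorem: $\sum_{p\le x}a(p)^2\log p\sim x$, i.e.\ the non-vanishing of $L(s,f\times f)$ on $\Re s=1$ away from $s=1$.  The paper's Lemma~\ref{X5} supplies exactly this input, treating the $l=2$ terms by partial summation against $\sum_{p\le x}a(p)^2\log p$ as in Lemma~\ref{X4}, rather than via a radius-of-analyticity argument.  You attribute the needed ``radius $>\tfrac12$'' to the finite-height Riemann Hypothesis for $H$, but that hypothesis controls the poles of $W$, not the behaviour of $R$; the two issues are independent, and for $R$ the Rankin--Selberg non-vanishing is the indispensable ingredient.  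With this repaired, your argument matches the paper's.
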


Taking $\a=0$ and $\b$ an ordinate where $H(1/2+i\b)\ne 0$, the right side above equals
the ``analytic rank'' $m_E(\frac12)$, which conjecturally equals
the rank of $E(\QQ)$.  The left side, on the other hand, is an expression involving only the distribution
of zeros of $L(s,E_d)$ and zeros of $L(s,\chi_d)$. Conceptually what this means is that
some arithmetic information on the given elliptic curve $E$ is encoded in the sequences
of zeros of {\it other} $L-$functions, not only the one associated to $E$.


\section{Classical sums over zeros}


Assume that $F\in \SS^*$.
We use frequently the estimate (cf. \cite{S3}, (1.6))
\be\label{NT}
N_F(T) = \left| \{ \rho=\beta+i\gamma \in Z(F) : 0<\beta<1,
0<\gamma\le T\} \right| = \frac{d_F}{2\pi} T \log T + c_1 T + O(\log T)
\ee
for some constant $c_1=c_1(F)$.
An easy consequence is
\be\label{NT1}
N_F(T+A)-N_F(T) = O(A\log T) \qquad (1\le A\le T).
\ee
Another easy corollary is, for $F\in \SS^*$ and $G\in \SS^*$,
\be\label{sumrecipgamma}
\ssum{0<\g,\g'\le T \\ \rho\in Z(F), \rho'\in Z(G)} \min \( K, \frac{1}{|\g-\g'|} \) \ll 
T(K+\log T)\log^2 T \qquad (K \ge 1).
\ee

Finally, we need a uniform version of Landau's theorem \cite[Lemma 1]{FZ}.
The following is given in \cite{FSZ}.

\begin{lem}\label{Landau} Let $F\in\mathcal S$, $x>1$, $T\geq2$, and 
let $n_x$ be a nearest integer to $x$.
Then, for any $\eps > 0$,
\benn\label{c1}
\sum_{0<\gamma\leq T}x^{\rho}=
-\frac{\Lambda_F(n_x)}{2\pi} \frac{e^{iT\log(x/n_x)}-1}{i\log(x/n_x)}
+O_{\eps}\(x^{1+\theta_F}\log (2x) + x^{1+\eps} \log T + \frac{\log T}{\log x} \).
\eenn
\end{lem}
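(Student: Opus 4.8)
The plan is to start from the classical explicit-formula machinery. Writing $\sum_{0<\gamma\le T} x^\rho$ as a contour integral of $x^s \cdot (-F'/F)(s)$ around a long rectangle, one picks up the sum over zeros from the poles; the main term of the final answer comes from the single Dirichlet coefficient $\Lambda_F(n_x)$ at the integer nearest to $x$, because $x^s = \sum_n \Lambda_F(n)(x/n)^s \cdot n^{-s}\cdot(\ldots)$-type considerations single out $n=n_x$ as the term with $|\log(x/n)|$ smallest. Concretely, first I would apply the Riemann–von Mangoldt-style explicit formula $\sum_{0<\gamma\le T} x^\rho = -\sum_{n\le X} \Lambda_F(n)(\text{weight}) + (\text{archimedean and trivial-zero contributions})$, choosing the cutoff and the horizontal contour at height $T$ so that the error from truncating the Dirichlet series is controlled by axiom (iv) ($\Lambda_F(n)\ll n^{\theta_F}$) and the growth of $F'/F$ on vertical lines, which follows from finite order (axiom (i)) together with \eqref{NT}.

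Second, I would isolate the term $n=n_x$: since $|\log(x/n_x)|\le \log(2n_x)/n_x \asymp 1/x$ is the smallest among all $n$, summing the geometric-type contribution $\int_0^T (x/n_x)^{it}\,dt = \frac{e^{iT\log(x/n_x)}-1}{i\log(x/n_x)}$ gives exactly the stated main term $-\frac{\Lambda_F(n_x)}{2\pi}\cdot\frac{e^{iT\log(x/n_x)}-1}{i\log(x/n_x)}$. For every other integer $n\ne n_x$ with $n\le X$ one has $|\log(x/n)|\gg 1/x$ as well but the oscillatory sum $\sum_{n\ne n_x} \Lambda_F(n)\min(T, 1/|\log(x/n)|)$ must be bounded; splitting into $n$ close to $x$ (where $|\log(x/n)|\asymp |n-x|/x$, giving $\sum \frac{x}{|n-x|}\ll x\log(2x)$ after using $\Lambda_F(n)\ll n^{\theta_F}$, hence $x^{1+\theta_F}\log(2x)$) and $n$ far from $x$ (where $1/|\log(x/n)|\ll 1$, giving $\ll X \cdot X^{\theta_F}$, absorbed by choosing $X\asymp x^{1+\eps}$-ish and re-optimizing into the $x^{1+\eps}\log T$ term) produces the three error terms displayed. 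The term $\frac{\log T}{\log x}$ comes from the contribution of $n$ extremely close to $x$ — essentially the $n$ for which $|\log(x/n)|$ could be as small as $\asymp 1/x$ but there is no cancellation, contributing $\min(T,1/|\log(x/n_x)|)$ times $\Lambda_F$ when $n_x$ itself is being handled, and more precisely from balancing $T|\log(x/n)|\le 1$ versus $>1$ in the $n$ near $x$ range.

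The main obstacle, and the reason this is nontrivial, is obtaining the error terms \emph{uniformly in both $x$ and $T$} with no hidden dependence on $F$ beyond $\theta_F$ and the implied constants in the axioms. The usual Landau formula is stated for fixed $x$; here one needs to track the $T$-dependence of the contour integral of $F'/F$ carefully, using that between consecutive zero-ordinates $F'/F$ is well behaved and that \eqref{NT} controls how many zeros lie in any unit interval. Specifically, bounding $(F'/F)(\sigma+iT)$ for $T$ not near an ordinate requires the standard lemma (Davenport \cite[\S 12]{Da}-style) expressing $F'/F$ near the critical strip as $\sum_{|\gamma-T|\le 1}\frac{1}{s-\rho} + O(\log T)$, valid in the Selberg class by axioms (i)–(iii); choosing $T$ (or averaging over a short interval of heights, then undoing it via \eqref{NT1}) to avoid ordinates costs only an extra $O(\log T)$, which is dominated by the stated errors. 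Once this uniformity is in place, the rest is the bookkeeping sketched above. This is exactly the content of \cite{FSZ}, to which one defers for the full details.
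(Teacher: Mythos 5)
The paper does not actually prove Lemma~\ref{Landau}: it simply writes ``The following is given in \cite{FSZ},'' and the unweighted $\zeta$-case traces back to \cite[Lemma 1]{FZ}. Your sketch correctly identifies the contour-integration method that those references use (integrate $-\tfrac{F'}{F}(s)x^s$ around a tall rectangle, expand the Dirichlet series on the right vertical edge, isolate $n=n_x$ for the main term, and estimate the remaining contour pieces), and like the paper you ultimately defer to \cite{FSZ} for the details. In that sense you are at the same level of rigor as the text.

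That said, two of your error-term attributions are not right and would mislead someone actually trying to write this out. The term $\tfrac{\log T}{\log x}$ does \emph{not} come from integers ``extremely close to $x$'': the closest such integer is $n_x$ itself and is the main term, and every other $n$ satisfies $|\log(x/n)|\gg 1/n$, so its contribution is already covered by the $x^{1+\theta_F}\log(2x)$ bound. The $\tfrac{\log T}{\log x}$ term is an archimedean contribution, coming from the left vertical segment of the contour after applying the functional equation: there $\tfrac{F'}{F}(\sigma+it)$ at fixed $\sigma<0$ carries a Gamma-factor term of size $\asymp \tfrac{d_F}{2}\log|t|$, and integrating $\log t\cdot x^{\sigma+it}$ over $T_0\le t\le T$ by parts produces $\tfrac{\log T}{\log x}$. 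Relatedly, the factor $x^{1+\eps}$ in the middle error term is not the result of truncating a Dirichlet series at $X\asymp x^{1+\eps}$ and ``re-optimizing''; it enters because the right vertical abscissa must be taken at $a=1+\eps$ (the Ramanujan bound of axiom (v) controls $a_F(n)$, not $\Lambda_F(n)$, and axiom (iv) only gives $\Lambda_F(n)\ll n^{\theta_F}$), so $x^a=x^{1+\eps}$, with the $\log T$ coming from the contour length and the zero-counting input \eqref{NT1}. These details matter for making the sketch into a proof, but since the paper itself leaves everything to \cite{FSZ}, your sketch-plus-citation is an acceptable match for what the paper actually provides.
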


\textbf{Remark.}  If $1<x<3/2$, then $n_x=1$, $\Lambda_F(n_x)=0$ and the error term in Lemma \ref{Landau}
is $O(1+\frac{\log T}{\log x})$.  For $x=1+o(1/T)$, this is worse than the trivial bound $O(T\log T)$ coming from
\eqref{NT}.  Thus, we may replace the term $\frac{\log T}{\log x}$ in the statement of Lemma \ref{Landau} by
$\min(T\log T, \frac{\log T}{\log x})$.


\section{The contribution of zeros off the critical line}


For $F\in \SS^*$, define
\be\label{DFxi}
\begin{split}
 D_F(\xi) = D_F(\xi;T) &= \ssum{\rho\in Z(F) \\ 0<\g\le T} 
e^{2\pi i\g \xi}\( 1 - e^{2\pi \xi(\beta-1/2)} \) \\
 &= \frac12 \sum_{0<\g\le T} e^{2\pi i\g \xi}\( 2 - e^{2\pi \xi(\beta-1/2)} - 
e^{2\pi \xi(1/2-\beta)} \).
\end{split}
\ee
The second equality follows from the fact that if $\beta+i\g$ is a nontrivial
zero of $F$ then so is $1-\beta+i\g$ (this follows from the functional equation (ii)).
Throughout the remainder of this section, $\rho_j=\beta_j+i\g_j\in Z(F)$ ($j=1,2$).

\begin{lem}\label{beta}
Suppose that $F\in \SS^*$.
Uniformly for $T\ge 3$ and $1\le U \le \frac{A(F)}{8\pi} \log T$,
\[
 \int_0^U |D_F(\xi;T)|^2\, d\xi \ll \frac{U^4 T}{\log T}(\log\log T)^4 + T^{1-A(F)/U} (\log T)^{B(F)+2}. 
\]
\end{lem}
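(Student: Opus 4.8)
The plan is to expand the square, bound the resulting oscillatory integral over each pair of zeros in two complementary ways, and then organise the double sum over zeros according to the size of $\b-\tfrac12$, using the zero-density hypothesis (vii) and the spacing bound \eqref{sumrecipgamma}. Start from the symmetric form in the second line of \eqref{DFxi}, namely $D_F(\xi;T)=-2\sum_{0<\g\le T}e^{2\pi i\g\xi}\sinh^2\!\big(\pi\xi(\b-\tfrac12)\big)$, in which only zeros off the critical line contribute. Writing $\del_j=\b_j-\tfrac12$ and squaring,
\[
\int_0^U|D_F(\xi;T)|^2\,d\xi=4\ssum{0<\g_1,\g_2\le T\\ \rho_1,\rho_2\in Z(F)}\int_0^U e^{2\pi i(\g_1-\g_2)\xi}\sinh^2(\pi\xi\del_1)\sinh^2(\pi\xi\del_2)\,d\xi .
\]

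From $|\sinh t|\le|t|e^{|t|}$ we get $\sinh^2(\pi\xi\del_j)\le\pi^2\xi^2\del_j^2 e^{2\pi\xi|\del_j|}$ and $\big|\tfrac{d}{d\xi}\sinh^2(\pi\xi\del_j)\big|\ll\xi\,\del_j^2\,e^{2\pi\xi|\del_j|}$. The integrand above vanishes at $\xi=0$, so a single integration by parts produces no boundary term there; together with the trivial estimate this gives, for every pair of zeros,
\[
\Big|\int_0^U e^{2\pi i(\g_1-\g_2)\xi}\sinh^2(\pi\xi\del_1)\sinh^2(\pi\xi\del_2)\,d\xi\Big|\ \ll\ \del_1^2\del_2^2\,U^4\,e^{2\pi U(|\del_1|+|\del_2|)}\,\min\!\Big(U,\tfrac{1}{|\g_1-\g_2|}\Big).
\]
The decisive point is that the hypothesis $U\le\frac{A(F)}{8\pi}\log T$ forces $e^{2\pi U|\del|}\le T^{A(F)|\del|/4}$, while by (vii) the number of zeros with $0<\g\le T$ and $|\b-\tfrac12|\ge v$ is at most $2N_F(\tfrac12+v,T)\ll T^{1-A(F)v}(\log T)^{B(F)}$; thus the exponential amplitude of a zero is always more than cancelled by the density saving coming from its distance to the critical line, and only a net gain $T^{-3A(F)|\del|/4}$ survives.

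I would then split the zeros at $|\b-\tfrac12|=1/U$ (``near'' versus ``far'') and decompose each class dyadically in $|\b-\tfrac12|\in(2^{-k-1},2^{-k}]$. For pairs of near zeros the exponentials are $O(1)$, so the per-pair bound reduces to $\del_1^2\del_2^2 U^4\min(U,1/|\g_1-\g_2|)$; splitting $\min(U,1/|\g_1-\g_2|)$ into its short-range part ($|\g_1-\g_2|\le 1/U$) and a dyadic tail, then applying Cauchy--Schwarz in $\del_1^2\del_2^2$ and \eqref{NT1}, one is left with $(\log T)S$ and $(\log^2 T)S$ respectively, where $S=\sum_{0<\g\le T}(\b-\tfrac12)^4$. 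A dyadic summation over $|\b-\tfrac12|$ based on (vii) (the relevant range being $1/\log T\lesssim|\b-\tfrac12|\lesssim 1/U$) gives $S\ll T(\log\log T)^4/(\log T)^3$, so, using $U\ll\log T$, the near-near pairs contribute $\ll\frac{U^4 T}{\log T}(\log\log T)^4$ --- the fourth power of $\log\log T$ directly reflecting the power $4$ in $S$, itself forced by the exact cancellation $\sum_{0<\g\le T}(\b-\tfrac12)=0$. For pairs containing a far zero, of which there are only $\ll T^{1-A(F)/U}(\log T)^{B(F)}$, carrying the dyadic parameter through the amplitude/density trade-off controls each block, and bounding the $\min(U,1/|\g_1-\g_2|)$-weights over the remaining zero by the per-ordinate estimate $\ll\log^2 T$ (from \eqref{NT1} and \eqref{sumrecipgamma}) gives the second error term $T^{1-A(F)/U}(\log T)^{B(F)+2}$, into which all $B(F)$-dependence is absorbed.

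The step I expect to be the main obstacle is exactly this last bookkeeping for the far and near-far pairs: one must not replace $e^{2\pi U(|\del_1|+|\del_2|)}$ or the local density of zeros near an ordinate by its crude maximum --- that reintroduces a factor $T^{cA(F)}$ or an extra $\log T$ --- but keep $|\del_1|,|\del_2|$ coupled to the density estimate throughout the dyadic sums. A minor related point is that the threshold $|\b-\tfrac12|\order 1/U$ should be taken with a suitable constant (say $|\b-\tfrac12|=\tfrac{1}{4U}$), but the resulting discrepancy between $T^{1-A(F)/(4U)}$ and $T^{1-A(F)/U}$ is harmless, since for $U\order\log T$ the two differ by a bounded factor, while for smaller $U$ the entire far contribution is already dominated by the first error term.
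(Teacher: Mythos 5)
Your proposal follows the same strategy as the paper's proof: expand $|D_F|^2$ into a double sum over zeros, bound each pair's oscillatory integral by combining the trivial estimate with a single integration by parts to obtain the $\min(U,1/|\g_1-\g_2|)$ factor, then split according to whether $|\beta-\tfrac12|$ is small or large compared with $1/U$ and use the zero-density bound (vii) together with \eqref{NT1} and \eqref{sumrecipgamma}. The one genuine simplification is that you use a single unified per-pair bound with the quantity $\delta_1^2\delta_2^2$ kept intact, whereas the paper handles the four ranges ($|\g_1-\g_2|\lessgtr 1$ and $\max|\beta_j-\tfrac12|\lessgtr 1/U$) separately, and in the case $|\g_1-\g_2|>1$, $\max|\beta_j-\tfrac12|\le 1/U$ it evaluates the integral exactly via \eqref{intI} and exploits the identity $2f(0)-2f(a)-2f(b)+f(a+b)+f(a-b)=0$ for even quadratic polynomials to extract the $\alpha^4U^4/|\Delta|$ saving; your direct integration by parts already yields $\delta_1^2\delta_2^2U^4/|\Delta|\le\alpha^4U^4/|\Delta|$, so the explicit computation is not needed. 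The price you pay is the weaker exponent $T^{1-cA(F)/U}$ with $c<1$ in the second error term, but as you note this is harmless, and indeed in the paper's application of the lemma $U$ is always $O(\log T/\log\log T)$, so adjusting the constant in the upper bound $U\le\frac{A(F)}{8\pi}\log T$ accommodates it. Two small slips: the step you label Cauchy--Schwarz on $\delta_1^2\delta_2^2$ is really AM--GM (or symmetrization), and the ``exact cancellation $\sum(\beta-\tfrac12)=0$'' is not what produces the fourth power --- the power $4$ comes simply from $\sinh^2(\pi\xi\delta_1)\sinh^2(\pi\xi\delta_2)\sim\pi^4\xi^4\delta_1^2\delta_2^2$, which in turn reflects the symmetric form of $D_F$ arising from the $\beta\leftrightarrow 1-\beta$ pairing of zeros. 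Neither affects the argument.
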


\begin{proof}
We have
\be\label{sumg1g2}
\int_0^U |D_F(\xi;T)|^2\, d\xi =\frac14 \sum_{0 < \g_1, \g_2 \le T} \int_0^U
 e^{2\pi i\xi(\g_1-\g_2)} \prod_{k=1}^2 (2-e^{2\pi\xi(\b_k-1/2)}-e^{2\pi\xi(1/2-\b_k)}) \, d\xi.
\ee
Let $I(\rho_1,\rho_2,\xi)$ be the integrand on the right side of \eqref{sumg1g2}, and put
\[
 \Delta= \g_1-\g_2, \qquad \a = \max\big(|\b_1-1/2|, |\b_2-1/2|\big).
\]

We consider four conditions on the pair $(\rho_1,\rho_2)$.

Case 1: $|\Delta|\le 1$ and $\a \le \frac{1}{U}$.  Since $|2-e^x-e^{-x}|\ll x^2$ for $0\le x\le 2\pi$, we have
\[
 I(\rho_1,\rho_2,\xi)\ll \a^4 \xi^4 \le \a^4 U^4.
\]
 Let
\[
 N_F^*(\sigma,T) = \# \{ (\rho_1,\rho_2): 0<\g_1,\g_2\le T: \max(\b_1,\b_2) \ge \sigma,
|\g_1-\g_2|\le 1 \}.
\]
From \eqref{NT1}, we have
\[
 N_F^*(\sigma,T) \ll (\log T) N_F(\sigma,T).
\]
The sum of $\int_0^U I(\rho_1,\rho_2,\xi)\, d\xi$ over all such pairs
$(\rho_1,\rho_2)$ is, using \eqref{NT} and the zero-density estimate (vi),
\begin{align*}
 &\ll U^5 \int_0^{1/U} \a^4 (-dN_F^*(\tfrac12+\a,T)) \le 3U^5 \int_0^{1/U}
\a^3 N_F^*(\tfrac12+\a,T)\, d\a \\
&\ll U^5 T \int_0^{\infty} \a^3 \min\( T^{-\a A(F)} (\log T)^{B(F)+1}, \log^2 T\) \, d\a \ll
\frac{U^5 T}{\log^2 T}(\log\log T)^4.
\end{align*}

Case 2:  $|\Delta|\le 1$ and $\a > \frac{1}{U}$.   Then
$I(\rho_1,\rho_2,\xi)\ll e^{4\pi \a \xi}$ and thus 
\[
 \int_0^U I(\rho_1,\rho_2,\xi)\, d\xi \ll \a^{-1} e^{4\pi \a U} \le U e^{4\pi \a U}.
\]
The contribution to the right side of \eqref{sumg1g2} from all such pairs $(\rho_1,\rho_2)$ is
therefore
\begin{align*}
&\ll U \int_{1/U}^{1/2} e^{4\pi \a U} (-dN_F^*(\tfrac12+\a,T)) \\
&= U\left[ e^{4\pi} N_F^*(\tfrac12+\tfrac{1}{U},T) + 4\pi U \int_{1/U}^{1/2} 
e^{4\pi \a U} N_F^*(\tfrac12+\a,T)\, d\a \right] \\
&\ll U T (\log T)^{B(F)+1}  \left[ T^{-A(F)/U} + U\int_{1/U}^{\infty} e^{(4\pi U-(\log T)A(F))\a}\, d\a \right] \\
&\ll T^{1-A(F)/U} \(\log T\)^{B(F)+2}.
\end{align*}
We used repeatedly the upper bound $U\le \frac{A(F)}{8\pi} \log T$.

Case 3: $|\Delta|>1$ and $\a \le \frac{1}{U}$.  We have
\be\label{intI}
\int_0^U I(\rho_1,\rho_2,\xi)\, d\xi = \frac{1}{2\pi} 
\left[ 2g(0)-2g(\b_1-\tfrac12)-2g(\b_2-\tfrac12)+g(\b_1+\b_2-1)+g(\b_1-\b_2)\right],
\ee
where
\[
 g(z) = \frac{e^{2\pi i U \Delta+2\pi Uz}-1}{i\Delta+z} + \frac{e^{2\pi i U \Delta-2\pi Uz}-1}{i\Delta-z}.
\]
Since we apply this with $z\le \frac{1}{U}$, by Taylor's theorem we get
\begin{align*}
 g(z) &= e^{2\pi i U\Delta} \frac{2i\Delta +(4\pi^2 i U^2\Delta-4\pi U)z^2 + O(|\Delta| z^4 U^4)}
{-\Delta^2-z^2} + \frac{2i\Delta}{\Delta^2+z^2} \\ 
&= e^{2\pi i U\Delta} \frac{2i\Delta+(4\pi^2 i U^2\Delta-4\pi U-2i/\Delta)z^2+
O(|\Delta| z^4 U^4)}{-\Delta^2}+\frac{2i}{\Delta}\(1-\frac{z^2}{i\Delta^2}+O\pfrac{z^4}{\Delta^4}\).
\end{align*}
For any even quadratic polynomial $f:\CC\to\CC$, $2f(0)-2f(a)-2f(b)+f(a+b)+f(a-b)=0$.
Hence, by \eqref{intI},
\[
\int_0^U I(\rho_1,\rho_2,\xi)\, d\xi \ll \frac{\a^4 U^4}{|\Delta|}.
\]
For non-negative integer $j$, define
\[
 N_j(\sigma,T)=\# \{ (\rho_1,\rho_2) : 2^j < |\g_1-\g_2|\le 2^{j+1}, 0<\g_1,\g_2\le T, \beta_1\ge \sigma\}
\]
and observe that by \eqref{NT},
\be\label{Nj}
 N_j(\sigma,T) \ll (2^j\log T)N_F(\sigma,T).
\ee
Using condition (vii) in the definition of $\SS^*$, the contribution to the right side of \eqref{sumg1g2} 
from all pairs $(\rho_1,\rho_2)$ counted in case 3 is
\begin{align*}
 &\ll \sum_{1\le 2^j\le T} \frac{U^4}{2^j} \int_0^{1/U} \a^4 (-d N_j(\tfrac12+\a,T))\\
&\ll \sum_{1\le 2^j\le T} \frac{U^4}{2^j}\int_0^{1/U} \a^3 N_j(\tfrac12+\a,T)\, d\a\\
&\ll \sum_{1\le 2^j\le T} U^4 T  \int_0^{\infty} \a^3 \min\( T^{-\a A(F)} (\log T)^{B(F)+1},
\log^2 T\)\, d\a
  \ll \frac{U^4 T}{\log T}(\log\log T)^4.
\end{align*}

Case 4: $|\Delta|>1$, $\a > \frac{1}{U}$.
Using formula \eqref{intI}, the crude bound $|g(z)| \ll \frac{1}{|\Delta|} (
e^{2\pi U |z|}+1)$, and estimate \eqref{Nj},  the contribution to the right side of \eqref{sumg1g2} 
from all such pairs $(\rho_1,\rho_2)$ is
\begin{align*}
 &\ll \sum_{1\le 2^j\le T} 2^{-j} \int_{1/U}^{1/2} e^{4\pi U \a}
   (-dN_j(\tfrac12+\a,T)) \\
&= \sum_{1\le 2^j\le T} 2^{-j} \left[ e^{4\pi} N_j(\tfrac12+\tfrac{1}{U},T) + 4\pi U
  \int_{1/U}^{1/2} e^{4\pi U\a} N_j(\tfrac12+\a,T)\, d\a \right]\\
&\ll T (\log T)^{B(F)+1} \sum_{1\le 2^j\le T}  
\left[ T^{-A(F)/U} + \frac{U}{\log T} T^{-A(F)/U} \right] \ll T^{1-A(F)/U} (\log T)^{B(F)+2}.
\end{align*}

Combining the estimates in the four cases completes the proof of the lemma.
\end{proof}


\section{Proof of Theorems \ref{main} and \ref{main2}}


Throughout this section, $\rho_1=\b_1+i\g_1\in Z(F)$ and  $\rho_2=\b_2+i\g_2\in Z(G)$.

Writing
\[
 Q_F(\xi;T) = \ssum{\rho\in Z(F) \\ 0<\g\le T} e^{2\pi i \xi \g},
\]
we then have
\be\label{main-1}
S : = \sum_{0<\g_1,\g_2\le T} h(\g_1-\g_2) = \int_\RR \hh(\xi) Q_F(\xi;T) \overline{Q_G(\xi;T)}\, d\xi
=2 \Re \int_0^\infty\hh(\xi) Q_F(\xi;T) \overline{Q_G(\xi;T)}\, d\xi.
\ee
Decompose the integral as
\be\label{S1S2}
S=2\Re \int_{0}^U \hh(\xi)  Q_F(\xi;T) \overline{Q_G(\xi;T)}\, d\xi+ 
\int_{|\xi|>U} \hh(\xi)  Q_F(\xi;T) \overline{Q_G(\xi;T)}\, d\xi 
=:2\Re S_1+S_2,
\ee
where
\[
U = \frac{c\log T}{\log\log T}, \quad c = \frac{\min(1,A(F),A(G))}{100\pi}.
\]
We can quickly dispense with $S_2$ due to the rapid decay of $\hh'(\xi)$ from condition (3) in the definition
of $\HH$.   Since
\[
\int_W^V  Q_F(\xi;T) \overline{Q_G(\xi;T)}\,  d\xi \ll \sum_{0<\g_1,\g_2\le T} 
\min\( V-W, \frac{1}{|\g_1-\g_2|} \), 
\]
we obtain from \eqref{sumrecipgamma} and integration by parts the bound
\be\label{S2}\begin{split}
\Big| \int_U^\infty \hh(\xi)  Q_F(\xi;T) \overline{Q_G(\xi;T)}\, d\xi \Big| &\ll
\int_U^\infty |\hh'(\xi)| \sum_{0< \g_1,\g_2\le T} \min\(\xi,\frac{1}{|\g_1-\g_2|}\)\, d\xi \\
&\ll \frac{T\log^3 T}{U^4}+\frac{T\log^2 T}{U^3} \ll \frac{T (\log\log T)^4}{\log T}.
\end{split}\ee

For $S_1$, we decompose $Q_F(\xi;T)$ as follows:
\[
 Q_F(\xi;T) = \ssum{\rho\in Z(F) \\ 0<\g\le T} \left[ e^{2\pi \xi(\rho-1/2)} + e^{2\pi i \g \xi}\(
 1 - e^{2\pi \xi(\b-1/2)} \) \right] = M_F(\xi) + E_F(\xi) + D_F(\xi),
\]
where $D_F(\xi)$ is given by \eqref{DFxi}, and writing $x=e^{2\pi \xi}$,
\[
 M_F(\xi) = -\frac{\Lambda_F(n_x)}{2\pi\sqrt{n_x}} \frac{e^{iT\log(x/n_x)}-1}{i\log(x/n_x)}.
\]
By Lemma \ref{Landau} (cf. the remark following the lemma) and condition (iv) defining the Selberg class ($|\Lambda_F(n)|\ll
n^{\theta_F}$),
\bal
E_F(\xi) &= -\frac{\Lambda_F(n_x)}{2\pi} \frac{e^{iT\log(x/n_x)}-1}{i\log(x/n_x)} \( \frac{1}{\sqrt{x}}-\frac{1}{\sqrt{n_x}} \)
 + O\(x^{1/2+\theta_F+\eps}\log T+\min\(\frac{\log T}{\log x},T\log T\) \) \\
&\ll |\Lambda_F(n_x)| \frac{|x-n_x|}{n_x^{3/2} |\log(x/n_x)|} + e^{2 \pi \xi} \log T + \min \( \frac{\log T}{\xi},T\log T \) \\
&\ll e^{2 \pi \xi} \log T + \min \( \frac{\log T}{\xi},T\log T \) .
\eal
Think of $M_F(\xi)$ as the main term and $D_F(\xi)$ and $E_F(\xi)$ as error terms.
Making a similar decomposition $Q_G(\xi;T) = M_G(\xi) + D_G(\xi) + E_G(\xi)$,
we have
\begin{multline}
 \big| Q_F(\xi;T)\overline{Q_G(\xi;T)} - M_F(\xi)\overline{M_G(\xi)} \big|  = 
\big| M_F(\xi)(\overline{E_G(\xi)+D_G(\xi)})+\overline{M_G(\xi)}(E_F(\xi)+D_F(\xi))  \\ +
(E_F(\xi)+D_F(\xi))(\overline{E_G(\xi)}+\overline{D_G(\xi)}) \big|.
\end{multline}
Applying the Cauchy-Schwartz inequality, we obtain
\be\label{IEDM}
 \int_0^U |\hh(\xi)| \cdot \big| Q_F(\xi;T)\overline{Q_G(\xi;T)} - M_F(\xi)\overline{M_G(\xi)} \big|\, d\xi \ll
\mathcal{E}+\mathcal{D}+\mathcal{M}^{1/2} (\mathcal{E}^{1/2}+\mathcal{D}^{1/2}),
\ee
where
\bal
\mathcal{D}&=\int_0^U |\hh(\xi)| \( |D_F(\xi)|^2 + |D_G(\xi)|^2 \)\, d\xi, \\
\mathcal{E}&=\int_0^U |\hh(\xi)| \( |E_F(\xi)|^2 + |E_G(\xi)|^2 \)\, d\xi, \\
\mathcal{M}&=\int_0^U |\hh(\xi)| \( |M_F(\xi)|^2 + |M_G(\xi)|^2 \)\, d\xi.
\eal
Since $\hh(0)=0$, we have $|\hh(\xi)| \ll \xi$  and hence
\be\label{E}\begin{split}
 \mathcal{E} &\ll \int_0^{1/T}  \xi(T\log T)^2\, d\xi + \int_{1/T}^U \xi \left[
\pfrac{\log T}{\xi}^2+e^{4\pi\xi}\log^2 T \right]\, d\xi \\
&\ll \log^2 T + \log^3 T + U e^{4\pi U}\log^2 T \ll T^{1/2}.
\end{split}\ee
Observe that the hypothesized decay $|\hh'(\xi)|\ll |\xi|^{-5}$ for $|\xi|\ge 1$ implies that
\be\label{hhatdecay}
|\hh(\xi)| \ll |\xi|^{-4} \qquad (|\xi| \ge 1).
\ee
By \eqref{hhatdecay}, Lemma \ref{beta} and integration by parts,
\be\label{D}\begin{split}
 \mathcal{D} &\ll \int_0^U (1+\xi)^{-5} \int_{0}^\xi |D_F(v)|^2+|D_G(v)|^2\, dv\, d\xi \\
&\ll \frac{T(\log\log T)^4}{\log T}\int_0^U (1+\xi)^{-1}\, d\xi \\
&\ll \frac{T(\log\log T)^5}{\log T}.
\end{split}\ee
Next, using condition (vi) in the definition of $\SS^*$,
\be\label{M}\begin{split}
 \mathcal{M} &\ll 1 + \sum_{2\le n\le e^{2\pi U}+1} \frac{|\Lambda_F(n)|^2+|\Lambda_G(n)|^2}{n}
\int_{\frac{\log(n-1/2)}{2\pi}}^{\frac{\log(n+1/2)}{2\pi}}
|\hh(\xi)| \min\( T, \frac{1}{|2\pi \xi-\log n|}\)^2\,d\xi\\
&\ll 1 + \sum_{2\le n\le e^{2\pi U}+1} \frac{T(|\Lambda_F(n)|^2+|\Lambda_G(n)|^2)}{n(\log n)^4} \ll T.
\end{split}\ee
Thus, by \eqref{main-1}, \eqref{S1S2}, \eqref{S2},\eqref{IEDM}, \eqref{E}, \eqref{D} and \eqref{M},
\be\label{main-2}
\begin{split}
S &= 2\Re \int_0^U \hh(\xi) M_F(\xi)\overline{M_G(\xi)} d\xi + 
  O\pfrac{T}{(\log T)^{1/3}}\\
&= \frac{1}{2\pi^2} \Re \sum_{2\le n\le e^{2\pi U}+1} \frac{\Lambda_F(n)\overline{\Lambda_G(n)}}{n} 
\int_{\log(1-\frac{1}{2n})}^{\log(1+\frac{1}{2n})} \hh\pfrac{\log n + u}{2\pi} \left|
\frac{e^{iTu}-1}{iu} \right|^2\, du +O\pfrac{T}{(\log T)^{1/3}}.
\end{split}
\ee
Because $\hh'$ is bounded (a consequence of condition (3) in the definition of $\mathcal{H}$),
replacing $\hh(\frac{\log n + u}{2\pi})$ by $\hh(\frac{\log n}{2\pi})$ produces a 
total error $\mathcal{R}$ which satisfies
\bal
\mathcal{R} &\ll \sum_{2\le n\le e^{2\pi U}+1} \frac{|\Lambda_F(n)|^2+|\Lambda_G(n)|^2}{n} \int_{\log(1-\frac{1}{2n})}^{\log(1+\frac{1}{2n})}
 |u| \min \( T, \frac{1}{|u|} \)^2\, du \\
&\ll (\log T) \sum_{2\le n\le e^{2\pi U}+1} \frac{|\Lambda_F(n)|^2+|\Lambda_G(n)|^2}{n} \ll \log^3 T.
\eal
This implies that
\[
 S =  \frac{1}{2\pi^2} \Re \sum_{2\le n\le e^{2\pi U}+1} \frac{\Lambda_F(n)\overline{\Lambda_G(n)}}{n} 
\hh \pfrac{\log n}{2\pi} 
\int_{\log(1-\frac{1}{2n})}^{\log(1+\frac{1}{2n})} \left|
\frac{e^{iTu}-1}{iu} \right|^2\, du
+O\pfrac{T}{(\log T)^{1/3}}.
\]
Replacing the limits of integration with $(-\infty,\infty)$ produces an error of
\[
\sum_{n\le e^{2\pi U}+1} |\Lambda_F(n)|^2+|\Lambda_G(n)|^2 \ll T^{1/2},
\]
 while the ``complete'' integral equals $2\pi T$.  We obtain
\[
  S =  \frac{T}{\pi} \Re \sum_{2\le n\le e^{2\pi U}+1} \frac{\Lambda_F(n)\overline{\Lambda_G(n)}}{n} 
\hh \pfrac{\log n}{2\pi} +O\pfrac{T}{(\log T)^{1/3}}.
\]
Using (vi) once again, we extending the sum on $n$ to  all positive integers $n$.
The error $\mathcal{R}'$ induced satisfies 
\[
\mathcal{R}' \ll T \sum_{n>e^{2\pi U}} \frac{|\Lambda_F(n)|^2+|\Lambda_G(n)|^2}{n\log^4 n} \ll \frac{T}{U^2},
\]
hence
\[
  S =  \frac{T}{\pi} \Re \sum_{n=1}^\infty \frac{\Lambda_F(n)\overline{\Lambda_G(n)}}{n} 
 \hh \pfrac{\log n}{2\pi} +O\pfrac{T}{(\log T)^{1/3}}.
\]
 Finally, writing
$\hh$ as an integral and interchanging the sum and integral, we will show that
\be\label{finale}
 \sum_{n=1}^\infty \frac{\Lambda_F(n)\overline{\Lambda_G(n)}}{n} 
\hh \pfrac{\log n}{2\pi}  = \int_\RR h(t)  K_{F,G}(1+it) \, dt.
\ee
To show \eqref{finale}, we first observe that (vi) and the decay of $\hh(\xi)$ imply that
the left side of \eqref{finale} is absolutely convergent, and thus
\begin{align*}
\sum_{n=1}^\infty \frac{\Lambda_F(n)\overline{\Lambda_G(n)}}{n} 
\hh \pfrac{\log n}{2\pi} &=
 \lim_{\del\to0^+} \sum_{n=1}^\infty \frac{\Lambda_F(n)\overline{\Lambda_G(n)}}{n^{1+\del}} 
\hh \pfrac{\log n}{2\pi} \\
&= \lim_{\del\to0^+} \sum_{n=1}^\infty \frac{\Lambda_F(n)\overline{\Lambda_G(n)}}{n^{1+\del}}
\int_{-\infty}^\infty h(t) n^{-it}\, dt.
\end{align*}
For each \emph{fixed} $\del>0$, the sum-integral on the right side above is also absolutely convergent by 
(vi) and the assumption that $h\in L^1(\RR)$.  Hence, by Fubini's theorem, we may interchange the sum and integral,
 obtaining
\begin{align*}
 \sum_{n=1}^\infty \frac{\Lambda_F(n)\overline{\Lambda_G(n)}}{n} 
\hh \pfrac{\log n}{2\pi} &= \lim_{\del\to 0^+} \int_{-\infty}^\infty h(t) \sum_{n=1}^\infty
\frac{\Lambda_F(n)\overline{\Lambda_G(n)}}{n^{1+\del+it}} \, dt\\
&= \lim_{\del\to 0^+}\int_{-\infty}^\infty h(t)  K_{F,G}(1+\del+it) \, dt.
\end{align*}
Here we used the fact that the Dirichlet series for $K_{F,G}(s)$ converges to $K_{F,G}(s)$ for $\Re s > 1$.
Finally, we must take the limit back inside the integral.  We observe that by assumption,
$K_{F,G}(s)$ is analytic in $\{ s : \Re s \ge 1, s\ne 1 \}$.
 In the case of $F(s)=G(s)=\zeta(s)$ in Theorem 1,
the explicit formula \eqref{explicitzeta} plus standard density bounds for the zeros immediately give
\[
 K_{F,G}(1+\del+it) \ll \frac{1}{|\del+it|^2} + \log^3(2+|t|)
\]
uniformly for $\del\ge 0$.  Since $h\in \HH_0$, 
\[
 \int_{\-\infty}^\infty h(t) |K_{F,G}(1+\del+it)|\, dt
\]
converges uniformly for $\del>0$, and this proves \eqref{finale}.  For general $F$ and $G$, we do not necessarily
have a good enough zero-free region to deduce strong uniform bounds on $K_{F,G}(1+\del+it)$ as $\del\to 0^+$.
But here we assume that $h$ has compact support, so that the analyticity of $K_{F,G}$ still allows us to 
conclude \eqref{finale}.
  This completes the proof of Theorem \ref{main2}.


\section{Analyticity of $K_{F,G}$.  Proof of Theorem \ref{KFG}}\label{sec:KFG}


Let $F$ and $G$ be automorphic $L$-functions on $GL(n)$ and $GL(m)$, respectively, where $n$
and $m$ are positive integers.  For basic analytic properties of automorphic $L$-functions, the reader may consult
Sections 5.11 and 5.12 of \cite{IK}.  In particular, the Euler products for $F$ and $G$ have the shape
\[
 F(s)=\prod_p \prod_{i=1}^n (1-\a_i(p)p^{-s})^{-1}, \qquad G(s)=\prod_p \prod_{j=1}^m (1-\b_j(p)p^{-s})^{-1}
\]
for complex constants $\a_i(p),\b_j(p)$.  Both Euler products are absolutely convergent for $\Re s > 1$,
and both functions have analytic continuation to $\CC \setminus \{1\}$ and satisfy
functional equations of type \eqref{funceq} (see e.g. \cite[\S 5.12]{IK} and the references therein). 
 Although the Ramanujan-Petersson
conjecture (which states that $|\a_i(p)|\le 1$, $|\b_j(p)|\le 1$ for all $p$, $i$ and $j$)
is not known for $L$-functions of degree exceeding 2, we do have the Luo-Rudnick-Sarnak
bounds \cite[(5.95)]{IK}
\be\label{LRS}
|\a_i(p)| \le p^{\frac12 - \frac1{n^2+1}}, \qquad |\b_j(p)| \le p^{\frac12 - \frac{1}{m^2+1}}.
\ee
The Rankin-Selberg convolution $L$-function of $F$ and $\overline{G}$, which we denote by $H(s)$,
has Euler product of the form \cite[(5.9),(5.10)]{IK}
\be\label{Hs}
H(s)=\prod_{p\nmid Q} \prod_{i=1}^n \prod_{j=1}^m (1-\a_i(p)\overline{\b_j}(p)p^{-s})^{-1} \prod_{p|Q}
\prod_{j=1}^{mn}(1-\gamma_j(p)p^{-s})^{-1},
\ee
for some positive integer $Q$ and numbers $\gamma_j$ satisfying $|\gamma_j|<p$. 
The function $H(s)$ also has absolutely convergent
Euler product for $\Re s > 1$ (and hence is nonzero in this open half-plane) \cite{JS}.
Also, Shahidi proved that $H(1+it)\ne 0$ for all real $t$ (\cite{Sh81}; with an announcement in \cite{Sh80}).
Moreover, $H(s)$ is analytic at $s=1$ unless $F=G$, in which case $H$ has a simple pole at $s=1$ \cite{MW}.

By standard arguments, this is enough to establish a Mertens-type bound for sums of $\Lambda_H(n)$.  From
\eqref{Hs} we have that
\[
 \Lambda_H(p^k) = \frac{\Lambda_F(p^k) \overline{\Lambda_G(p^k)}}{\log p} \qquad (p\nmid Q).
\]
Hence, taking $F=G$ and using \eqref{LRS}, we get the crude upper bound
\begin{align*}
 \sum_{b\le x} \frac{|\Lambda_F(b)|^2}{b} &\le \sum_{(b,Q)>1} \frac{|\Lambda_F(b)|^2}{b} 
+(e \log x) \sum_{(b,Q)=1} \frac{|\Lambda_F(b)|^2}{b^\sigma \log b} \qquad \(\text{take }\sigma=1+\frac{1}{\log x}\) \\ 
&\ll 1 + (\log x) \( - \frac{H'}{H}(\sigma) + \sum_{(b,Q)>1} |\Lambda_H(b)| b^{-\sigma} \).
\end{align*}
We have $-\frac{H'}{H}(\sigma)=\frac{1}{\sigma-1}+O(1)=\log x+O(1)$ and
\[
 \sum_{(b,Q)>1} |\Lambda_H(b)| b^{-\sigma} \le mn \sum_{p|Q} \log p \sum_{k=1}^\infty \frac{1}{p^{(\sigma-1)k}}
\ll \log x.
\]
This proves part (a) of Theorem \ref{KFG}; that is, condition (vi) holds.

To prove part (b), we observe that from the absolute convergence of the Euler product in the case $F=G$ we obtain the bound
\be\label{sumai}
\sum_{i=1}^n \sum_p \frac{|\a_i(p)|^2}{p^\sigma} < \infty \qquad (\sigma>1).
\ee
For $\Re s > 1$,
\[
 \log H(s) = \sum_{p\nmid Q} \frac{a_F(p) \overline{a_G(p)}}{p^s} + J(s) - \sum_{p|Q} \sum_{j=1}^{mn}\log(1-\gamma_j(p)p^{-s}),
\]
where
\[
 J(s) = \sum_{k=2}^\infty \frac{1}{k} \sum_{i=1}^n \sum_{j=1}^m \frac{(\a_i(p) \overline{\b_j(p)})^k}{p^{ks}}.
\]
Let $\delta=\min(\frac{1}{n^2+1},\frac{1}{m^2+1})$.  We claim that $J(s)$ is an absolutely convergent Dirichlet series (hence analytic)
for $\Re s \ge 1 - \delta/2$.  Indeed, by \eqref{LRS} and \eqref{sumai}, for such $s$,
\[
\sum_{k=2}^\infty \frac{1}{k} \sum_{i,j} \left| \frac{(\a_i(p)\b_j(p))^k}{p^{ks}} \right| \le 
\sum_{k=2}^\infty \frac{1}{k}  \sum_{i,j} \frac{p^{(1-2\del)(k-1)}}{p^{(1-\del/2)k}}|\a_i(p)|^2 < \infty.
\]
Similarly, 
\[
K_{F,G}(s)=\sum_{p\nmid Q} \frac{a_F(p) \overline{a_G(p)}\log^2 p}{p^s} + \widetilde{K}(s),
\]
where $\widetilde{K}(s)$ is analytic for $\Re s \ge 1$.  Therefore, initially for $\Re s > 1$ and for $\Re s \ge 1$ (but $s\ne 1$ if $F=G$) by analytic continuation,
\[
 K_{F,G}(s) = \(-\frac{H'}{H}\)'(s) + E(s),
\]
where $E(s)$ is analytic for $\Re s \ge 1$.  This proves part (b) of Theorem \ref{KFG}.


\section{Proof of Theorem \ref{main3}}

 Fix an integer $q > 1$ and a primitive Dirichlet character $\chi$ mod $q$. We also fix a
 compactly supported function $g$ in $\mathcal{C}^\infty(\mathbb{R})$ such that
 $g \ge 0$ and $\int g = 1$. Denoting
$g_m(x) = m g(mx)$,  $g_m$ is more concentrated at the origin (as $m\rightarrow\infty$, 
$g_m$ approaches a Dirac delta function),
and also
$\int g_m = 1$.
Fix real numbers $\alpha, \beta$ in $\left(\frac12 , \infty\right)$, and let 
\[
h_m(x) = g_m(x-\alpha) - g_m(x-\beta).
\]
Here the key idea is that integrating by parts $k$-times gives
\[
\int h_m^{(k)}(t) H(t) dt = \int h_m(t) H^{(k)}(t) dt.
\]
By Theorem \ref{main2} with $F = L(s,\chi)$, as $T\rightarrow\infty$,
\[
S_{m,k}(T) := \frac{2\pi}{T}\ssum{0<\g_1,\g_2 \le T \\ \rho_1,\rho_2 \in Z(L(s,\chi))} 
h_m^{(k)}(\g_1-\g_2)\rightarrow
\int h_m^{(k)}(t) H_\chi(t) dt = \int h_m(t) H_\chi^{(k)}(t) dt,
\]
where
\[
H_\chi(t) = K_\chi(1+it) + K_\chi(1-it),
\]
\[
K_\chi(s) = \sum_{n=1}^\infty \frac{|\Lambda_F(n)|^2}{n^s} \qquad (\Re s > 1),
\]
and $K_\chi(s)$ is given by analytic continuation for $\Re s = 1$.  Since $\Lambda_F(n)=\Lambda(n)\chi(n)$,
\[
K_{\chi}(s) = \sum_{(n,q)=1} \frac{\Lambda^2(n)}{n^s} = K(s) - 
\sum_{p|q}\frac{\log^2 p}{p^s - 1}\,.
\]
Since $H_{\chi}(t)\in C^{\infty}(0,\infty)$ and $h_m$ approaches the difference of
two Dirac deltas as $m\rightarrow\infty$,
\be\label{name1}
\lim_{m\rightarrow\infty}\lim_{T\rightarrow\infty} S_{m,k} (T) 
= \lim_{m\rightarrow\infty} \int h_m(t) H_{\chi}^{(k)}(t) dt 
 = H_{\chi}^{(k)}(\alpha) - H_{\chi}^{(k)}(\beta) ,
\ee
for every fixed $k$.

\begin{lem}\label{X1}
Uniformly for $\Re s = 1$, $s \ne  1 ,$
\[
\left| K_{\chi}^{(k)}(s) - K^{(k)}(s)\right| \ll k! \log q .
\]
\end{lem}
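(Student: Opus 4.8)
\emph{Plan.}  The whole argument rests on the explicit relation recorded just above the statement, $K_\chi(s)=K(s)-\sum_{p\mid q}\log^2 p/(p^s-1)$, together with a careful estimate of each summand after $k$ differentiations.  I would set $g_p(s)=\log^2 p/(p^s-1)$; each $g_p$ is analytic on $\{\Re s>0\}$, hence on $\Re s=1$, so that there $K_\chi^{(k)}(s)-K^{(k)}(s)=-\sum_{p\mid q}g_p^{(k)}(s)$.  For $\Re s>0$ one has the absolutely convergent expansion $g_p(s)=(\log^2 p)\sum_{m\ge1}p^{-ms}$, and term-by-term differentiation gives $g_p^{(k)}(s)=(-1)^k(\log^{k+2}p)\sum_{m\ge1}m^kp^{-ms}$; taking absolute values on $\Re s=1$ leaves
\[
\bigl|g_p^{(k)}(s)\bigr|\le\log^{k+2}p\sum_{m\ge1}\frac{m^k}{p^m}.
\]

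\emph{The inner sum.}  This is the one genuinely delicate point, and the step I expect to be the main obstacle.  The cheap estimate $\sum_{m\ge1}m^kx^m\le k!\,(1-x)^{-(k+1)}$ must be avoided: taken with $x=1/p$ it contributes the spurious factor $(p\log p/(p-1))^{k+1}$, which is exponentially large in $k$.  Instead I would compare the sum with the Gamma integral $\int_0^\infty t^ke^{-tu}\,dt=k!\,u^{-(k+1)}$, where $u=\log p$.  The integrand $t\mapsto t^ke^{-tu}$ is unimodal with maximum $(k/u)^ke^{-k}$, which by the Stirling bound $(k/e)^k\le k!\,(2\pi k)^{-1/2}$ is at most $k!\,u^{-k}(2\pi k)^{-1/2}$.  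The standard comparison of the sum of a unimodal function with its integral then gives, with an absolute implied constant,
\[
\sum_{m\ge1}\frac{m^k}{p^m}=\sum_{m\ge1}m^ke^{-mu}\le\frac{k!}{u^{k+1}}+O\!\left(\frac{k!}{u^k\sqrt k}\right),
\]
so that, multiplying by $u^{k+2}=\log^{k+2}p$, for $\Re s=1$, $s\ne1$,
\[
\bigl|g_p^{(k)}(s)\bigr|\le k!\log p+O\!\left(\frac{k!\,\log^2 p}{\sqrt k}\right).
\]

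\emph{Summation.}  Finally I would sum over the primes $p\mid q$, using $\sum_{p\mid q}\log p\le\log q$ (since $\prod_{p\mid q}p\le q$) and $\sum_{p\mid q}\log^2 p\le(\log q)\sum_{p\mid q}\log p\le\log^2 q$, to get
\[
\bigl|K_\chi^{(k)}(s)-K^{(k)}(s)\bigr|\le k!\log q+O\!\left(\frac{k!\,\log^2 q}{\sqrt k}\right)\ll k!\log q,
\]
the $O$-term being of lower order (and for the boundedly many $k$ with $k\ll\log^2 q$ one simply notes that $\sum_{p\mid q}|g_p^{(k)}(s)|$ is bounded uniformly in $s$, $q$ and $\chi$ being fixed).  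In short, everything except the inner-sum estimate is bookkeeping; the crux is to bypass the geometric-series bound and run the Laplace/integral comparison, tracking the peak contribution $(k/u)^ke^{-k}$ precisely enough that after the $\log^{k+2}p$ weighting it costs only $O(k!\log^2 p/\sqrt k)$ instead of something exponential in $k$.
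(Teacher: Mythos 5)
Your proof is correct and follows essentially the same route as the paper: expand $K(s)-K_\chi(s)=\sum_{p\mid q}\log^2 p\,(p^s-1)^{-1}$, differentiate the geometric series termwise to get $(-1)^k\sum_{p\mid q}(\log p)^{k+2}\sum_{j\ge1}j^kp^{-js}$, bound $\sum_j j^k p^{-j}$ by the Gamma integral $\int_0^\infty x^k p^{-x}\,dx=k!/(\log p)^{k+1}$, and sum $\log p$ over $p\mid q$. The one place you go beyond the paper is worth noting: the paper simply writes ``$\sum_j j^k p^{-j}\ll\int_0^\infty x^k p^{-x}\,dx$'' without comment, whereas you correctly observe that for a unimodal summand this comparison carries an extra term of size $\max_x x^kp^{-x}\asymp k!/((\log p)^k\sqrt k)$, which after the $(\log p)^{k+2}$-weighting costs $O(k!\log^2 p/\sqrt k)$. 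You then dispose of this for large $k$ by the $1/\sqrt k$ decay and for small $k$ by fixing $q$; this makes explicit that the implied constant in the lemma depends on $q$ (as it must, since the fixed-$q$ regime $k\ll\log^2 q$ is handled trivially). This is a more careful rendering of the same idea rather than a different argument, and it identifies the one point the paper's proof is terse about.
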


\begin{proof}

\bal
K^{(k)}(s) - K_{\chi}^{(k)}(s) 
&
= \sum_{p|q}\log^2 p\left[ (p^s-1)^{-1}\right]^{(k)}
= \sum_{p|q}(-\log p)^{k+2}\sum_{j=1}^{\infty}\frac{j^k}{p^{js}}
\\
& \ll  \sum_{p|q}(\log p)^{k+2}\int_0^\infty x^kp^{-x} dx 
\\
& = \sum_{p|q}(\log p)\cdot k! \le k! \log q.
\eal

\end{proof}

\begin{cor}\label{X2}
Uniformly for $t \ne 0 ,$
\[
H_{\chi}^{(k)}(t) - H^{(k)}(t) \ll k! \log q.
\]
\end{cor}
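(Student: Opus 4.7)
The proof should be essentially immediate from Lemma \ref{X1}, so the plan is a short unpacking of derivatives with respect to $t$. First I would recall the definition $H_\chi(t) = K_\chi(1+it) + K_\chi(1-it)$ (and similarly for $H$ with $K_\chi$ replaced by $K$). Differentiating $k$ times in $t$ and applying the chain rule gives
\[
H_\chi^{(k)}(t) = i^k K_\chi^{(k)}(1+it) + (-i)^k K_\chi^{(k)}(1-it),
\]
and the analogous identity for $H^{(k)}(t)$.

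Subtracting the two expressions, I would write
\[
H_\chi^{(k)}(t) - H^{(k)}(t) = i^k \bigl( K_\chi^{(k)}(1+it) - K^{(k)}(1+it) \bigr) + (-i)^k \bigl( K_\chi^{(k)}(1-it) - K^{(k)}(1-it) \bigr).
\]
For any $t\ne 0$, both points $s = 1 + it$ and $s = 1 - it$ lie on the line $\Re s = 1$ and are different from $1$, so Lemma \ref{X1} applies at each. This bounds each of the two bracketed differences by a constant times $k!\log q$, and the triangle inequality combined with $|i^k|=|(-i)^k|=1$ yields the claimed estimate $H_\chi^{(k)}(t) - H^{(k)}(t) \ll k!\log q$ with an absolute implied constant (twice the one from Lemma \ref{X1}).

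There is no real obstacle here: the corollary is just the translation of Lemma \ref{X1} from the $s$-variable on the line $\Re s = 1$ to the real variable $t$, using that $\partial_t = i\partial_s$ on $s = 1+it$ and $\partial_t = -i\partial_s$ on $s = 1-it$, so the derivatives in $t$ match the derivatives in $s$ up to unimodular factors. The only point worth checking carefully is that the excluded point $s = 1$ in Lemma \ref{X1} corresponds precisely to the excluded point $t = 0$ in the corollary, which it does.
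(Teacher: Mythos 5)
Your proof is correct and is exactly the immediate deduction the paper intends (the corollary is stated without a written proof precisely because it follows directly from Lemma \ref{X1}). Unfolding $H_\chi(t)=K_\chi(1+it)+K_\chi(1-it)$, noting $\partial_t^k K_\chi(1\pm it)=(\pm i)^k K_\chi^{(k)}(1\pm it)$, and applying Lemma \ref{X1} at $s=1\pm it$ (both admissible when $t\ne 0$) gives the bound with implied constant twice that of the lemma, as you observe.
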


\begin{lem}\label{X4}
Let 
\[
W(s) = \left(\frac{\zeta'}{\zeta}\right)'(s),
\]
and let $S$ be a compact set of real numbers not containing zero.
Then, uniformly for $s = 1+it$, with $t\in S$, we have
\[
\lim_{k\rightarrow\infty}\left| \frac{K^{(k)}(s)-W^{(k)}(s)}{2^k(k+1)!}\right| = 0 .
\]
\end{lem}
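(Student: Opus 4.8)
The plan is to use the representation $K(s)=\sum_{m\ge 1}b_m(\zeta'/\zeta)'(ms)$ of Remark (ii), in which $b_1=1$. Differentiating term by term — legitimate in a punctured neighbourhood of $s=1+it_0$ ($t_0\ne 0$), since $(\zeta'/\zeta)'(\sigma)=O(2^{-\sigma})$ for $\sigma\ge 2$ makes the series locally uniformly convergent and each summand is analytic there — we get $K^{(k)}(s)-W^{(k)}(s)=\sum_{m\ge 2}b_m\,m^k\,W^{(k)}(ms)$. On the line $\Re s=1$ one has $\Re(ms)=m$, and I estimate $W^{(k)}(ms)$ in two complementary ways: (A) from the explicit formula $W(s)=(s-1)^{-2}-\sumstarrho(s-\rho)^{-2}$ (the $m=1$ instance of \eqref{explicit}), which after $k$-fold differentiation gives
\[
\frac{W^{(k)}(w)}{(k+1)!}=(-1)^k\Bigl(\frac{1}{(w-1)^{k+2}}-\sumstarrho\frac{1}{(w-\rho)^{k+2}}\Bigr);
\]
and (B) from the Dirichlet series, $|W^{(k)}(w)|\le\sum_{n\ge 2}\Lambda(n)(\log n)^{k+1}n^{-\Re w}$ for $\Re w>1$. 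I then split the $m$-sum at $M_0=3k$.

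For the head $3\le m\le 3k$ I use (A). Since $\Re(ms)=m$ one has $|ms-1|\ge m-1$ and $|ms-\rho|\ge m-\beta\ge m-1$ for nontrivial $\rho$ (as $\beta<1$), while $|ms-\rho|\ge m+2$ for trivial $\rho$. Writing $|ms-\rho|^{k+2}\ge(m-1)^k\bigl((m-1)^2+(mt-\gamma)^2\bigr)$ and inserting the count $\#\{\rho:\gamma\in[x,x+1]\}\ll\log(|x|+2)$ from \eqref{NT1}, a routine dyadic argument gives $\sum_\rho\bigl((m-1)^2+(mt-\gamma)^2\bigr)^{-1}\ll_S\log(2m)/(m-1)$ uniformly for $t\in S$, hence $|W^{(k)}(ms)|/(k+1)!\ll_S\log(2m)\,(m-1)^{-(k+1)}$. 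With $|b_m|\le m$ and $m\ge 3$, each head term divided by $2^k(k+1)!$ is then $\ll_S\log(2m)\,2^{-k}\bigl(m/(m-1)\bigr)^{k+1}\ll_S\log(2m)\,(3/4)^k$, and summing over the $\le 3k$ admissible values of $m$ gives a contribution $\ll_S k\log k\,(3/4)^k\to 0$.

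The term $m=2$ must be handled on its own, since there the crude estimate $|ms-\rho|\ge m-1$ only yields $|2s-\rho|\ge 1$. By (A) this term equals $b_2(-1)^k\bigl((2s-1)^{-(k+2)}-\sumstarrho(2s-\rho)^{-(k+2)}\bigr)$. Here $|2s-1|=\sqrt{1+4t^2}\ge\sqrt{1+4\delta^2}>1$ with $\delta=\min_{t\in S}|t|>0$, so the first term is $O(\lambda^k)$ for some $\lambda<1$. In the zero-sum, the $O_S(1)$ nontrivial zeros with $|\gamma|\le 4\max_{t\in S}|t|+8$ have $|2s-\rho|\ge 2-\beta$, which exceeds $1$ by a fixed positive amount (only finitely many values $\beta<1$ occur), so they contribute $O_S(\lambda_1^k)$ with $\lambda_1<1$; every other nontrivial zero has $|2s-\rho|\ge|2t-\gamma|\ge|\gamma|/2\ge 4$, giving $|2s-\rho|^{-(k+2)}\le 4^{-(k-1)}\cdot 8|\gamma|^{-3}$ with $\sum_\rho|\gamma|^{-3}<\infty$, so these contribute $O(4^{-k})$; the trivial zeros contribute $\le 2^{-(k+2)}\sum_{n\ge 1}(n+1)^{-(k+2)}=O(2^{-k})$; all uniformly for $t\in S$. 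For the tail $m>3k$ I use (B) together with $|b_m|\le m$, so the tail is at most $\sum_{n\ge 2}\Lambda(n)(\log n)^{k+1}\sum_{m>3k}m^{k+1}n^{-m}$; for $n\ge 2$ and $m>3k$ the inner summand decreases geometrically, whence $\sum_{m>3k}m^{k+1}n^{-m}\ll(3k)^{k+1}n^{-3k}$, and $\sum_{n\ge 2}\Lambda(n)(\log n)^{k+1}n^{-3k}\le\sum_{n\ge 2}(\log n)^{k+2}n^{-3k}\ll(k+2)!\,(3k-1)^{-(k+3)}$ (the $n=2$ term plus the elementary bound $\int_2^\infty(\log x)^{k+2}x^{-3k}\,dx\le(k+2)!\,(3k-1)^{-(k+3)}$). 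Thus the tail is $\ll(3k)^{k+1}(k+2)!\,(3k-1)^{-(k+3)}\ll(k+2)!/k^2$, which divided by $2^k(k+1)!$ is $\ll(k+2)/(k^2 2^k)\to 0$. Adding the three pieces proves the lemma.

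The step I expect to be the main obstacle is the calibration of the split point $M_0$: estimate (A) is lossy for large $m$ and (B) for small $m$, so $M_0$ must be chosen so that the head — comprising $\asymp k$ terms each only geometrically small in $k$ — and the tail — whose natural size is a factorial — both end up $o\bigl(2^k(k+1)!\bigr)$; checking that $M_0\asymp k$ does this while keeping every estimate uniform over the compact set $S$ is the delicate point. Uniformity is most fragile at $m=2$, where one genuinely needs $|2s-1|$ and $|2s-\rho|$ to be bounded below by a constant strictly greater than $1$ — which is precisely where the hypothesis $0\notin S$ is used.
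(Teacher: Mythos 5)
Your proof is correct, but it takes a genuinely different route from the paper's. The paper works directly with the Dirichlet series difference, writing
\[
K^{(k)}(s)-W^{(k)}(s) = (-1)^k\sum_{l\ge 2}(l^k-l^{k+1})R_{k,l},\qquad R_{k,l}=\sum_p\frac{(\log p)^{k+2}}{p^{ls}},
\]
grouping by the prime-power exponent $l$. It bounds $R_{k,l}$ for $l\ge 3$ by partial summation against $\theta(u)=\sum_{p\le u}\log p$ (getting a contribution $\ll (3/2)^k(k+1)!$), and isolates $l=2$ as the critical term, where it invokes the Prime Number Theorem in the form $\theta(u)=u+o(u)$ plus partial summation to show $R_{k,2}=o((k+1)!)$, uniformly for $t\in S$. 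You instead start from the Remark (ii) identity $K(s)=\sum_{m\ge1}b_m W(ms)$, differentiate termwise (your justification of local uniform convergence is fine since $\Re(ms)\ge m$ keeps $W$ and all its derivatives uniformly controlled for $m\ge 2$), and split the $m$-sum: the head $3\le m\le 3k$ via the explicit formula \eqref{explicitzeta}, the tail $m>3k$ via the Dirichlet series for $W^{(k)}$, and $m=2$ separately. Structurally the two arguments are parallel — in both cases the ``critical'' term is precisely the one whose geometric factor matches $2^k$, and that is exactly where the hypothesis $0\notin S$ is consumed — but the tools differ: the paper uses PNT asymptotics for $\theta$, while you use the nonvanishing of $\zeta$ on $\Re s=1$ through the Hadamard/explicit-formula representation. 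These are of course equivalent in depth. One small dividend of your route is that the $m=2$ contribution decays geometrically in $k$ (rather than merely being $o(1)$ as the paper's $R_{k,2}$ argument gives), so the overall rate of convergence is quantitatively a bit better; on the other hand, the paper's argument is more self-contained, since it does not require invoking the explicit formula or the zero-counting bound \eqref{NT1} inside this lemma. Your estimates — in particular the dyadic bound on $\sum_\rho\bigl((m-1)^2+(mt-\gamma)^2\bigr)^{-1}$, the careful treatment of trivial zeros, the $m=2$ split by $|\gamma|$, and the factorial tail bound — all check out.
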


\begin{proof}
First, we have
\be\label{X3}
\begin{split}
K^{(k)}(s)-W^{(k)}(s) &= (-1)^k \sum_{n=1}^{\infty} \frac{\Lambda^2(n)\log^k n - \Lambda(n)\log^{k+1}n}{n^s}\\
&= (-1)^k \sum_{l=2}^{\infty} \sum_p \frac{(l^k-l^{k+1})(\log p)^{k+2}}{p^{ls}}\\
&=:(-1)^k \sum_{l=2}^{\infty} (l^k-l^{k+1}) R_{k,l}.
\end{split}\ee

If $l \ge 3$, then
\bal
|R_{k,l}| &\le\ \int_{2^-}^\infty \frac{(\log u)^{k+1}}{u^l} d\, \theta(u)
 =  \int_{2}^\infty \theta(u) \frac{(\log u)^{k+1}}{u^l} \left( \frac{l}{u}
- \frac{k+1}{u\log u} \right)du
\\
&\ll l \int_{2}^\infty \frac{(\log u)^{k+1}}{u^l} du = \frac{l}{(l-1)^{k+2}}
\int_{(l-1)\log 2}^\infty w^{k+1} e^{-w} d w
\\
&\ll \begin{cases} \frac{l}{(l-1)^{k+2}}\cdot (k+1)! \quad ( \text {all} \; l) ,
& \\  \frac{l}{(l-1)^{k+2}} \cdot 2^{-(l-1)/2} \int_0^\infty w^{k+1} e^{-w/2} d w\ll
\frac{l}{(l-1)^{k+2}} \cdot 2^{k-l/2} (k+1)! \quad (l > 2k) .
\end{cases}
\eal
Therefore
\bal
\left|\sum_{l\ge 3} (l^k-l^{k+1}) R_{k,l}\right|
&
\ll (k+1)!\left[\sum_{l=3}^{2k}\left(\frac{l}{l-1}\right)^{k+2} + 2^k
\sum_{l=2k+1}^\infty\left(\frac{l}{l-1}\right)^{k+2}2^{-l/2} \right]
\\
& \ll (k+1)! \left\{ \left(\frac32\right)^k + k \left(\frac43\right)^k + 1\right\}
\ll \left(\frac32\right)^k (k+1)! \,.
\eal
When $l=2$, we must be more precise, using the Prime Number Theorem in the form 
$\theta(u)\sim u$ as $u\to\infty$.  Write $\theta(u)=u+E(u)$.  In what follows, $o(1)$ stands
for a function of $k$ that tends to zero as $k\to\infty$.  By partial summation we get
\[
R_{k,2} = \int_{2^-}^\infty \frac{(\log u)^{k+1}}{u^{2+2it}}du +
\int_2^ \infty\frac{(\log u)^{k+1}}{u^{3+2it}} E(u) \( 2 + 2it-\frac{k+1}{\log u}\)\, du. 
\]
Making the change of variables $w=\log u$ and using that $E(u)=o(u)$ as $u\to\infty$, we obtain
\begin{align*}
 |R_{k,2}| &\ll 1 + \left| \int_0^\infty \frac{w^{k+1}}{e^{(1+2it)w}}\, dw\right| + \int_{\log 2}^{k/10}
w^{k+1}\(1+|t|+\frac{k+1}{w}\)\, dw + o(1)\int_{k/10}^\infty \frac{w^{k+1}}{e^w}(1+|t|)\, dw \\
&= 1 + \frac{(k+1)!}{|1+2it|^{k+2}} + O((1+|t|)(k/10)^{k+2}) + o(1) (1+|t|)(k+1)! \\
&= o((k+1)!)
\end{align*}
uniformly for $t\in S$.  It follows from \eqref{X3} that $K^{(k)}(1+it)-W^{(k)}(1+it)=o(2^k(k+1)!)$
uniformly for $t\in S$. 
\end{proof}

In the same way, we can prove an analogous estimate needed for Theorem \ref{main5}.

\begin{lem}\label{X5}
Let $F,G,H,W$ be given by \eqref{FGHW}, and
and let $S$ be a compact set of real numbers not containing zero.
Then, uniformly for $s = 1+it$, with $t\in S$, we have
\[
\lim_{k\rightarrow\infty}\left| \frac{K_{F,G}^{(k)}(s)-W^{(k)}(s)}{2^k(k+1)!}\right| = 0 .
\]
\end{lem}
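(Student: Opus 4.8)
The plan is to transcribe the proof of Lemma \ref{X4} almost line for line, the one genuinely new ingredient being an asymptotic for a prime sum attached to the Rankin--Selberg square of $E$. First I would record the Dirichlet coefficients. Writing $\{\a_p,\b_p\}$ for the Satake parameters of $H=L(s,E)$ (so $|\a_p|=|\b_p|=1$, $\b_p=\overline{\a_p}$ for $p\nmid N$, and $\b_p=0$, $0\le\a_p\le1$ for $p\mid N$), twisting by the quadratic character $\chi_d$ gives, for $p\nmid d$,
\[
\Lambda_F(p^l)=\chi_d(p)^l(\a_p^l+\b_p^l)\log p,\qquad\Lambda_G(p^l)=\chi_d(p)^l\log p,\qquad\Lambda_H(p^l)=(\a_p^l+\b_p^l)\log p
\]
(with $\a_p^l+\b_p^l$ read as $\a_p^l$ when $p\mid N$), while $\Lambda_G(p^l)=0$ for $p\mid d$. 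Hence the coefficient of $p^{-ls}$ in $K_{F,G}-W$ is
\[
\gamma_{p,l}:=\Lambda_F(p^l)\overline{\Lambda_G(p^l)}-l\,\Lambda_H(p^l)\log p=
\begin{cases}(1-l)(\a_p^l+\b_p^l)(\log p)^2,&p\nmid d,\\ -l(\a_p^l+\b_p^l)(\log p)^2,&p\mid d,\end{cases}
\]
(again $\a_p^l+\b_p^l=\a_p^l$ if $p\mid N$), so $\gamma_{p,1}=0$ for every $p\nmid d$ and $|\gamma_{p,l}|\le2l(\log p)^2$ in all cases. Differentiating the Dirichlet series term by term, which is legitimate and absolutely convergent for $\Re s>\tfrac12$, yields
\[
K_{F,G}^{(k)}(s)-W^{(k)}(s)=(-1)^k\Bigl(\ssum{p\nmid d}\sum_{l\ge2}\gamma_{p,l}(l\log p)^kp^{-ls}+\ssum{p\mid d}\sum_{l\ge1}\gamma_{p,l}(l\log p)^kp^{-ls}\Bigr).
\]

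I would then dispose of everything except the $l=2$ term using the estimates already present in Lemma \ref{X4}. The finitely many primes dividing $dN$ contribute $\ll_{d,N}\sum_{p\mid dN}(\log p)^{k+2}\sum_{l\ge1}l^{k+1}p^{-l}\ll_{d,N}(k+1)!$, via the elementary bound $(\log p)^{k+2}\sum_{l\ge1}l^{k+1}p^{-l}\le(1+\log p)(k+1)!$ (compare the sum with $\int_0^\infty x^{k+1}p^{-x}\,dx$ plus its maximal term). For the part with $p\nmid dN$ and $l\ge3$, the bound $|\a_p^l+\b_p^l|\le2$ reduces matters to the estimate of $\sum_{l\ge3}(l^k-l^{k+1})R_{k,l}$ carried out in Lemma \ref{X4}, which gives $\ll(3/2)^k(k+1)!$. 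Both bounds are $o(2^k(k+1)!)$ and uniform in $t$, since $|p^{-ls}|=p^{-l}$ on $\Re s=1$.

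The main point is the remaining $l=2$ term, which equals $(-1)^{k+1}2^kR'_{k,2}(s)$ with $R'_{k,2}(s)=\sum_{p\nmid dN}(\a_p^2+\b_p^2)(\log p)^{k+2}p^{-2s}$, and I must show $R'_{k,2}(s)=o((k+1)!)$ uniformly for $s=1+it$, $t\in S$. Here the classical Prime Number Theorem used for the $l=2$ term in Lemma \ref{X4} is replaced by its analogue for the Rankin--Selberg $L$-function $L(s,E\times E)$. Using $\a_p^2+\b_p^2=\Lambda_{E\times E}(p)/\log p-2$ for $p\nmid N$, together with the facts that $L(s,E\times E)=\zeta(s)L(s,\mathrm{sym}^2 f)$ has a simple pole at $s=1$ and is non-vanishing on $\Re s=1$, one obtains $\psi_2(u):=\sum_{p\le u}(\a_p^2+\b_p^2)\log p=cu+o(u)$ for a constant $c$. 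Partial summation then gives, up to a negligible correction from the primes dividing $dN$,
\[
R'_{k,2}(s)=c\int_2^\infty(\log u)^{k+1}u^{-2s}\,du+\int_{2^-}^\infty(\log u)^{k+1}u^{-2s}\,dE_2(u)+O\bigl((\log2)^{k+1}\bigr),
\]
where $E_2(u)=\psi_2(u)-cu=o(u)$. The first integral equals $c(k+1)!/(1+2it)^{k+2}+O((\log2)^{k+2})$, which is $o((k+1)!)$ because $|1+2it|\ge\sqrt{1+4\delta_0^2}>1$ with $\delta_0:=\min_{t\in S}|t|>0$. For the second, I would integrate by parts once and split the range at $u=e^{k/10}$: on $[2,e^{k/10}]$ the trivial bound $|E_2(u)|\ll u$ contributes $O((1+|s|)(k/10)^{k+1})=o((k+1)!)$, while on $[e^{k/10},\infty)$ the fact that $\sup_{u\ge e^{k/10}}|E_2(u)|/u\to0$ as $k\to\infty$ contributes $o((k+1)!)$; all of this is uniform for $t\in S$ since $|1+it|$ is bounded there. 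Assembling the four pieces gives $K^{(k)}_{F,G}(s)-W^{(k)}(s)=o(2^k(k+1)!)$, as required. The hard part is precisely this $l=2$ term: whereas Lemma \ref{X4} only needed $\theta(u)\sim u$, here one must invoke the simple pole at $s=1$ and the non-vanishing on $\Re s=1$ of $L(s,E\times E)$; everything else is a routine transcription of the earlier argument.
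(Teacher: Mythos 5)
Your proof is correct and takes essentially the same route as the paper's: you reduce to the paper's Lemma \ref{X4} estimates for the $l\ge3$ terms and the finitely many primes dividing $Nd$, and you handle the $l=2$ sum $\sum_p(\a_p^2+\b_p^2)(\log p)^{k+2}p^{-2s}$ by the Prime Number Theorem for the Rankin--Selberg convolution (equivalently $\sum_{p\le x}a(p)^2\log p\sim x$, which is exactly what the paper invokes via \cite[Theorems 5.13, 5.44]{IK}, noting $a(p)^2-2=\a_p^2+\b_p^2$). You simply spell out the partial-summation step that the paper leaves implicit by appeal to the Lemma \ref{X4} computation.
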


\begin{proof}
 We compute that $\Lambda_H(p^l)=(\a_p^l+\b_p^l)\log p$ and
\[
 \Lambda_F(p^l)\Lambda_G(p^l)=\begin{cases} \Lambda_H(p^l) \log p & p\nmid Nd \\ 0 & p|Nd. \end{cases}
\]
Hence,
\[
 (-1)^k\( W^{(k)}(s)-K_{F,G}^{(k)}(s) \) = \ssum{p,l\ge 2 \\ p\nmid Nd} \frac{(l^{k+1}-l^k)\Lambda_H(p^l)
(\log p)^{k+1}}{p^{ls}} + \sum_{p|Nd} \sum_{l=1}^\infty \frac{(l\cdot \log p)^{k+1} \Lambda_H(p^l)}{p^{ls}}.
\]
For $\Re s = 1$, the second double sum above is, in absolute value, at most
\bal
2\sum_{p|Nd} (\log p)^{k+2} \sum_{l=1}^\infty \frac{l^{k+1}}{p^l} &\ll \sum_{p|Nd} (\log p)^{k+2}
\int_0^\infty x^{k+1} p^{-x}\, dx \\
&=(k+1)! \sum_{p|Nd} 1 \ll (k+1)!.
\eal
Since $|\Lambda_H(p^l)|\le 2\log p$, the argument used in the proof of Lemma \ref{X4} implies
that the terms in the sum over $p\nmid Nd$ and $l\ge 3$ total $O((3/2)^k(k+1)!)$.  The terms
with $l=2$ and $p\nmid Nd$ total 
\[
 2^k \sum_{p\nmid Nd} \frac{(a(p)^2-2)(\log p)^{k+2}}{p^{2s}}.
\]
From the standard theory of Rankin-Selberg convolutions \cite[Theorems 5.13, 5.44]{IK}, we have $\sum_{p\le x} a(p)^2\log p \sim x$ 
(the analog of the Prime Number Theorem for $a(p)^2$) and the argument in the proof of Lemma
\ref{X4} implies that the above sum is $o(2^k(k+1)!)$, uniformly for the values of $s$ under consideration.
\end{proof}

\begin{proof}[Proof of Theorem \ref{main3}]
Denote by
\[
 H_k(t) = \frac {(-1)^{k+1}}{2^{k+1}(k+1)!} \(W^{(k)}(1+it) + W^{(k)}(1-it) \).
\]
 By \eqref{name1}, Corollary \ref{X2}, and Lemma \ref{X4},
\bal
\lim_{k\rightarrow\infty} \frac {(-1)^{k+1}}{2^{k+1}(k+1)!}
\lim_{m\rightarrow\infty}\lim_{T\rightarrow\infty} S_{m,k} (T)
&
=  \lim_{k\rightarrow\infty} \frac {(-1)^{k+1}}{2^{k+1}(k+1)!}\left( H^{(k)}(\alpha) - H^{(k)}(\beta)\right)
\\
& =  \lim_{k\rightarrow\infty}\left( H_k(\alpha) - H_k(\beta)\right) .
\eal

By the explicit formula for $\zeta'(s)/\zeta(s)$, e.g. \cite[\S 12, (8)]{Da},
\[
 W(s) = \frac{1}{(s-1)^2} - \sum_{\rho} \frac{1}{(s-\rho)^2} - \frac12 \( \frac{\Gamma'}{\Gamma}\(\frac{s}{2}+1\)\)'.
\]
Using a well-know series expansion for $\Gamma'(s)/\Gamma(s)$, we get for $k\ge 0$ that
\be\label{explicitzeta}
 W^{(k)}(s) = (-1)^k (k+1)! \( \frac{1}{(s-1)^{k+2}} - \sumstarrho \frac{1}{(s-\rho)^{k+2}} \),
\ee
where the sum on $\rho$ is over \emph{all} zeros of $\zeta$, including the trivial zeros at the points
$\rho=-2,-4,\ldots.$.

{\it Assume RH:} For any $t > \frac12$, 
\bal
\lim_{k\rightarrow\infty} H_k(t)
&
=  \lim_{k\rightarrow\infty} \frac{(-1)^{k+k+1}}{2^{k+1}} \(
\frac{1}{(it)^{k+2}} + \frac{1}{(-it)^{k+2}} - \sumstarrho
\frac1{(1+it-\rho)^{k+2}} + \frac1{(1-it-\rho)^{k+2}} \)
\\
& = \frac12 \( m_{\zeta}\(\frac12+it\) + m_\zeta\(\frac12-it\) \) = m_{\zeta}\(\frac12+it\),
\eal
where $m_{\zeta}(\rho)$ denotes the multiplicity of the zero of
the Riemann zeta function at $\rho$.  We note that the above limit calculation remains valid under the 
weaker assumption that there are no zeros off the critical line with imaginary part in $[t-\frac12,t+\frac12]$;
that is, the full strength of RH is not necessary.

This proves the implication $(b)\implies(a)$ in Theorem \ref{main3}, and it
also shows that
\[
 \lim_{k\rightarrow\infty} f_{\chi,g,\beta,k}(\alpha) = m_{\zeta}\left(\frac12 + i\alpha\right) -
 m_{\zeta}\left(\frac12 + i\beta\right).
 \]

{\it Assume RH fails:} 
Then let $\rho_0$ be a zero of the Riemann zeta function with
$\Re \rho_0 > \frac 12$. 
We distinguish two cases, according as to whether
$\zeta(s)$ has any zeros inside the open disk of radius $\frac12$ centered at $1+i\beta$.

Assume first that there are no zeros of $\zeta(s)$ inside this disk.
We then
choose $\alpha = \Im \rho_0$, and consider the zeros
of $\zeta(s)$ closest to $1+i\alpha$. Denote the minimum distance to $1+i\alpha$
by $\delta_0$, and let
$\rho_1, \dots, \rho_r$ denote the zeros of $\zeta(s)$ at distance $\delta_0$ from
$1+i\alpha$, with multiplicities $m_1,\dots,m_r$. We write the differences
$1+i\alpha- \rho_j$ in the form 
\[
1+i\alpha-\rho_j = \delta_0 e^{2\pi i\theta_j}, \; j = 1,\dots, r.
\]
Then the contribution of $\rho_1, \dots, \rho_r$ in $ f_{\chi,g,\beta,k}(\alpha)$
is of the order of magnitude of 
$\left(\frac1{2\delta_0}\right)^k\sum_{j=1}^{r} m_j  \cos 2\pi k\theta_j$.
We now let $k$ tend to infinity along a subsequence for which all the fractional parts
$\{ k\theta_j\}$ tend to zero. Then along this subsequence, 
$
\sum_{j=1}^{r} m_j  \cos 2\pi k\theta_j \rightarrow \sum_{j=1}^r m_j \,.
$
By comparison, the contribution of all the other zeros of $\zeta(s)$ in $ f_{\chi,g,\beta,k}(\alpha)$
is exponentially small. Since $\delta_0 < \frac12$,
it follows that $f_{\chi,g,\beta,k}(\alpha)$ tends to infinity
along this subsequence. 

Assume now that $\zeta(s)$ has at least one zero inside the open disk of radius $\frac12$ centered at $1+i\beta$. In this case we choose any $\alpha \in (\frac12, \infty)$ for which
all the zeros of $\zeta(s)$ are at distance larger than $\frac12$ from $1+i\alpha$.
We then repeat the reasoning from the previous case, where the role of
$\rho_1,\dots,\rho_r$ is now played by the zeros of $\zeta(s)$ closest to $1+i\beta$.
As above, it follows that $f_{\chi,g,\beta,k}(\alpha)$ tends to infinity
along a suitable subsequence. 

This completes the proof of the implication $(a)\implies(b)$ in Theorem \ref{main3}.
\end{proof}

\begin{proof}[Proof of Theorem \ref{main5}]
 This follows in the same way, using Lemma \ref{X5} instead of Lemma \ref{X4}.
Condition (vii) from the definition of the class $\SS^*$
 holds by the analysis in Section \ref{sec:Li}.  Condition (vi) holds by Theorem \ref{KFG},
and the other conditions hold by the discussion preceding the statement of Theorem \ref{main5}.
Unlike the situation in Theorem \ref{main3}, $F$, $G$ and $H$ are all entire functions
and thus $\a$ and $\b$ are not excluded from the interval $[-\frac12,\frac12]$.  This comes from
the analog of the explicit formula \eqref{explicitzeta}, where there will be no term corresponding
to the pole at $s=1$ as with $\zeta(s)$.
\end{proof}

%
%
\begin{section}{Notes on shifted convolution sums of Fourier coefficients of $\Gamma_0(D)$ cusp forms}
\label{sec:Li}
%
%
\newcommand{\Su}{\mathcal S}
\newcommand{\Saq}{\ssum{a \textup{ (mod $q$)} \\ (a,q)=1}}
\newcommand{\h}[1]{h\left(\frac qQ, \frac{#1}{Q^2}\right)}

\subsection{Zero density result}
Luo's zero density result follows from understanding a mollified second moment.  
Since $L(1/2+it, f)$ has conductor $|t|^2$, this requires understanding the shifted convolution
 problem for the Fourier coefficients $\lambda_f(n)$ of $f$ (see Lemma 2.1 of \cite{Luo}). 
 In the case of the full modular group, results were proven by Hafner \cite{Haf} and subsequently by Luo 
\cite{Luo}.  Unfortunately, there does not appear to be such a result for general congruence subgroups in the
 literature and we provide a proof below for the sake of completeness.

\subsection{Shifted convolution sum}
Fix $\mu, \nu >0$ and $l \in \mathbb{Z}$, with $l \neq 0$.  Let $\omega(x)$ be a smooth function supported on $[1/2, 5/2]$ and $N \geq 1$ (with $N \gg (\mu\nu)^\epsilon$ for convenience).  Let
\begin{equation}
\label{eqn:S}
\Su = \Su (\mu, \nu, l) := \sum_{\mu m - \nu n = l} \lambda_f(n) \lambda_f(m) \omega\pfrac{n}{N}.
\end{equation}
We have normalized the coefficients $\lambda_f$ so that $\lambda_f(n) \ll n^\epsilon$ is known by the work
 of Deligne.  We study $\Su$ using the delta method, as used by Duke, Friedlander and Iwaniec in \cite{DFI}. 
 We state the version developed by Heath-Brown \cite{HB}.  As  usual, let
$\delta(0)=1$ and $\delta(n)=0$ for $n\ne 0$.
Then there exists a smooth function $h: (0, \infty) \times \mathbb{R} \rightarrow \mathbb{R}$ such that 
$$\delta(n) = \frac{1}{Q^2} \sum_{q\geq 1} \Saq e\pfrac{an}{q} \h{n} + O_A\pfrac{1}{Q^A}.
$$Applying this to (\ref{eqn:S}) with $Q = \sqrt{\nu N}$, we see that
\begin{equation}
\Su = \frac{1}{Q^2} \sum_{q\geq 1} \Saq e\pfrac{-al}{q} \Su(a)+ O_A\pfrac{1}{N^A},
\end{equation}where
\begin{align}
\Su(a) = \sum_{m, n} \lambda_f(n) \lambda_f(m) e\pfrac{a(\mu m - \nu n)}{q} g(m, n),
\end{align}where $g(x, y)$ is a smooth function compactly supported on $[\frac{\nu N}{2\mu}+\frac l \mu, \frac{5\nu N}{2\mu}+\frac l \mu] \times [N/2, 5N/2]$, and moreover $$\frac{\partial^{j+k}}{\partial x^j \partial y^k}g(x, y) \ll_{j, k} \max\left(N^{-j}, \pfrac{\mu}{Q^2}^j\right)\max \left(\pfrac{\nu}{Q^2}^k, N^{-k}\right) \leq \frac{1}{N^{l+j}},$$ 
for all $j, l \geq 0$.

Let $\mu ' = \mu/(\mu, q)$, $q_\mu = q/(\mu, q)$, $\nu ' = \nu/(\nu, q)$, and $q_\nu = q/(q, \nu)$.  Further, let $D_{1, \mu} = (q_\mu, D)$ and let $D_{2, \mu} = D/D_{1, \mu}$ and similarly define $D_{1, \nu}$ and $D_{2, \nu}$.  By Chinese Remainder Theorem, we have that $\chi = \chi_{D_{1, \mu}}\chi_{D_{2, \mu}}$ for unique $\chi_{D_{i, \mu}}$ characters modulo $D_{i, \mu}$.  

Now we apply the Voronoi summation formula for these coefficients due to Kowalski, Michel and VanderKam (see Appendix A \cite{KMV} for the proof and exact notation) to get that
\begin{align}
\Su(a) 
&= \chi_{D_{1, \mu}}(\bar{a})\chi_{D_{1, \nu}}(\bar{a}) \chi_{D_{2, \mu}}(- q_\mu) \chi_{D_{2, \nu}}(- q_\nu) \frac{\eta_f(D_{2, \mu})\eta_f(D_{2, \nu})}{\sqrt{D_{2, \mu}D_{2, \nu}}} \notag \\
&\sum_{m, n} \lambda_{fD_{2,\mu}}(n) \lambda_{fD_{2, \nu}}(m) e\pfrac{\overline{a\mu '}m}{q_\mu} e\pfrac{- \overline{a \nu '} n}{q_\nu} G(m, n),
\end{align}where
$$G(m, n)=\frac{4\pi^2}{q_\nu q_\mu} \int_0^\infty\int_0^\infty g(x, y) J_{k-1}\pfrac{4\pi \sqrt{mx}}{q_\mu \sqrt{D_{2,\mu}}}J_{k-1}\pfrac{4\pi \sqrt{ny}}{q_\nu \sqrt{D_{2,\nu}}}dx dy.
$$The rest of the proof is similar to pg. 156 of \cite{Luo}.  To be more precise, using integration by parts and the fact that 
$$J_{k-1}(2\pi x) = \frac{1}{\pi \sqrt{x}}\Re \left(W(2\pi x) e\left(x-k/4+1/8\right)\right),$$ for an essentially flat function $W$ satisfying $x^l W^{l}(x) \ll 1$, we see that the sum over $m$ and $n$ is essentially restricted to be of length $\frac{q_{\mu}^2D_{2,\mu}\mu}{\nu N^{1-\epsilon}}$ and $\frac{q_{\nu}^2D_{2,\mu}}{N^{1-\epsilon}}$, respectively.  Moreover, this tells us that trivially
$$G(m, n) \ll \frac{1}{\sqrt{q_\nu q_\mu}(mn)^{1/4}} \left(N \frac{\nu}{\mu}N\right)^{3/4}.$$  

Also, the sum over $a$ creates a Kloosterman sum $S_\chi(l, *; q)$ for which the Weil bound may be applied.  This gives the bound 
$$\Su  \ll_f \sqrt{l} \nu^{3/4} N^{3/4+\epsilon}.
$$Note that the above bound is symmetric in $N$ and $\frac{\nu}{\mu}N$ (in the sense that it is equal to $(\frac{\nu}{\mu}N)^{3/4} \mu^{3/4}$).

One may then derive analytic continuation and bounds for the Dirichlet series
$$D_{\mu, \nu} (s, l) = \sum_{n\geq 1} \frac{a(n)a((\nu n + l)/\mu)}{(\nu n + l/2)^s}
$$ by using a smooth partition of unity.  This suffices for the purposes of proving a zero density result as in Theorem 1.1 of \cite{Luo}.

\end{section}

\bigskip

{\bf Acknowlegements.}
The authors are grateful to the referees for their helpful comments and suggestions.
The authors thank Xiannan Li for the material in Section \ref{sec:Li}, for
 helpful discussions about Rankin-Selberg $L$-functions, and for posing
the question of general linear combinations of zeros $\g-\a\g'$.
The authors are also thankful to Christopher Hughes, Jon Keating, Jeffrey Lagarias, Brad Rodgers, 
and Michael Rubinstein for informing them about previous work on this topic.


\end{document}